\newtheorem{example}{Example}
\newtheorem{thm}{Theorem}
\newtheorem{remark}{Remark}
\newtheorem{defn}{Definition}
\begin{document}

\title{Graph Linear Canonical Transform: Definition, Vertex-Frequency Analysis and Filter Design}

\author{Jian Yi Chen and Bing Zhao Li
        % <-this % stops a space
\thanks{J. Y. Chen is with the School of Mathematics and Statistics, Beijing Institute of Technology
	, Beijing 100081, China. (e-mail: 15030218436@163.com)}% <-this % stops a space
\thanks{B. Z. Li is with the Beijing Key Laboratory on MCAACI, Beijing Institute of Technology, Beijing 100081, China. (e-mail: li\_bingzhao@bit.edu.cn)}}

% The paper headers
\markboth{Journal of \LaTeX\ Class Files,~Vol.~14, No.~8, August~2021}%
{Shell \MakeLowercase{\textit{et al.}}: A Sample Article Using IEEEtran.cls for IEEE Journals}

\IEEEpubid{0000--0000/00\$00.00~\copyright~2021 IEEE}
% Remember, if you use this you must call \IEEEpubidadjcol in the second
% column for its text to clear the IEEEpubid mark.

\maketitle

\begin{abstract}
This paper proposes a graph linear canonical transform (GLCT) by decomposing the linear canonical parameter matrix into fractional Fourier transform, scale transform, and chirp modulation for graph signal processing. The GLCT enables adjustable smoothing modes, enhancing alignment with graph signals. Leveraging traditional fractional domain time-frequency analysis, we investigate vertex-frequency analysis in the graph linear canonical domain, aiming to overcome limitations in capturing local information. Filter design methods, including optimal design and learning with stochastic gradient descent, are analyzed and applied to image classification tasks. The proposed GLCT and vertex-frequency analysis present innovative approaches to signal processing challenges, with potential applications in various fields.
\end{abstract}

\begin{IEEEkeywords}
Graph signal processing, graph linear canonical transform, vertex-frequency analysis, filter design.
\end{IEEEkeywords}

\section{Introduction} \label{sec1}
Graph signal processing has a broad spectrum of applications in computer vision, image processing, and social network analysis. Advancements in these technologies will catalyze technological innovation and societal progress, offering abundant developmental opportunities and benefits for humanity \cite{sandryhaila2014big, sandryhaila2013discrete, ortega2018graph}.
The essence of graph signal processing lies in the construction of dictionary atoms, followed by the representation of graph signals as linear combinations of these atoms \cite{shuman2020localized}. This representation technique finds diverse applications, including data visualization analysis, statistical analysis, compression, and regularization in machine learning, and ill-posed inverse problems such as repair, denoising, and classification tasks. 

When the atom is the eigenvector (Fourier basis) obtained from the Laplacian matrix decomposition of the graph, different Fourier bases correspond to distinct levels of smoothness. Their associated eigenvalues provide a gradient characterization of the smoothness of the graph signal, effectively functioning as frequencies \cite{shuman2013emmerging, sandryhaila2014discrete}. Lower eigenvalues indicate slower changes in the corresponding Fourier basis, resulting in consistent signal values across similar nodes. Conversely, higher eigenvalues signify more rapid changes in the Fourier basis, leading to inconsistent signal values across similar nodes. Thus, the coefficients of the graph Fourier transform (GFT) represent the amplitude of the graph signal on the corresponding frequency component, reflecting the strength of the signal on that frequency component. 

Based on the GFT, various graph analysis methods have emerged, with the core focus being on designing different dictionary atoms to represent graph signals. The effectiveness of signal analysis refers to the ability to capture important information in a signal with fewer mathematical descriptions \cite{rubinstein2010dictionaries}, which essentially requires atoms to be able to sparsely represent the signal. 
Therefore, the effectiveness of vertex-frequency analysis of graph signals requires atoms to have the following characteristics in the spatial domain:
\begin{itemize}
	\item Interpretability: The methods should enable the explanation of the underlying graph structure.
	\item Multiresolution: They should be able to continuously approximate signals from low-frequency resolution to high-frequency resolution.
	\item Locality: Emphasis should be placed on the local structure of the graph.
	\item Multiscale: They should encompass vertex-frequency distributions of various scales.
	\item Smooth diversity: The ability to select different smoothing modes should be included.
\end{itemize}
Traditional vertex-frequency analysis methods can effectively fulfill the first four characteristics but are inadequate in addressing the last one\cite{stankovic2017vertex, stankovic2020vertex, shuman2012windowed}. The fixed smoothing mode, based on GFT, solely relies on its Laplacian matrix. Consequently, traditional vertex-frequency analysis methods are confined to characterizing signals from the horizontal and vertical directions of the vertex-frequency plane. This limitation stems from the mathematical structure of their atoms, typically comprising time shift, frequency shift, or scaling operations. As a result, these methods can only impart changes to atoms in the horizontal and vertical directions of the vertex-frequency plane\cite{shuman2016vertex, stankovic2019vertex}. Consequently, traditional vertex-frequency analysis methods fail to fully exploit the inherent geometric characteristics of the signal itself and encounter bottlenecks in enhancing signal processing performance further. An effective solution entails the introduction of a new vertex-frequency analysis method.

The Linear canonical transform (LCT) is a more generalized integral transform, with the Fourier transform, fractional Fourier transform (FrFT), and Fresnel transform all being regarded as special forms of LCT \cite{pei2002, Healy2016, Pei2001}. It demonstrates strong flexibility due to its additional parameters. Building upon the advantages of this transform method, it has been further integrated into graph signal processing technology, leading to significant research achievements. These include the development of graph fractional Fourier transform \cite{wang2017fractional}, graph linear canonical transform (GLCT) \cite{zhang2023discrete} based on adjacency matrix , and related vertex-frequency analysis methods \cite{wu2020fractional, yan2021windowed}.
The linear canonical parameter matrix can be decomposed into a linear combination of FrFT, scale transform, and chirp modulation (CM). Leveraging this pivotal property, this paper proposes a GLCT based on the graph Laplacian matrix, which significantly diverges from previous work \cite{zhang2023discrete}. The GLCT facilitates the adjustment of the smoothing mode to better align with the graph signal. Additionally, building upon the research foundation of traditional fractional domain time-frequency analysis methods \cite{brown2009general,wei2022linear,wei2014generalized,kou2012windowed}, this paper explores vertex-frequency analysis methods in the graph linear canonical domain. This research endeavors to address the inherent limitations of graph signal processing techniques in characterizing local information.

The vertex-frequency analysis method based on GLCT enables atoms to select different smoothing modes, facilitating optimal matching of intrinsic local features of the signal. This approach offers new ideas and methods for addressing problems in signal processing. 
This paper first presents a systematic framework for vertex-frequency analysis based on GLCT. It then discusses three definitions of filters:
(1) The filter is constructed based on the convolution and translation operators of GLCT, leading to the definition of windowed GLCT.
(2) By utilizing the spectral shift form of the kernel of the bandpass function in the linear canonical domain of the graph, we can define GLCT-based wavelet transform and S-transform.
(3) A filter constructed based on the vertex neighborhood can define local GLCT.
In practical applications, selecting the appropriate filter is crucial. For instance, various spectral neural networks primarily rely on different filter learning methods \cite{chen2020understanding, bo2023survey, wang2022powerful, ozturk2021optimal}. 
Therefore, we analyze methods for filter design, discussing optimal filter design as well as filter learning based on stochastic gradient descent methods. Subsequently, we apply the designed filters to image classification tasks. The filter design proposed in this paper can be utilized as convolutional layers in spectral graph neural networks \cite{defferrard2016convolutional, chien2020adaptive, bruna2013spectral, liao2019lanczosnet}, showcasing significant potential for various applications.

This paper is structured through the following steps. Section \ref{sec2} presents the basic theory of graph signal processing and defines time-frequency analysis methods based on LCT. In Section \ref{sec3}, the GLCT is proposed through FrFT, scaling, and CM. In Section \ref{sec4}, the definition of the vertex-frequency analysis method based on GLCT is discussed and summarized. In Section \ref{sec5}, we discuss optimal filter design and filter learning, and we apply the designed filters to image classification tasks.
Finally, Section \ref{sec6} concludes this paper.

\section{Preliminaries}
\label{sec2}
\subsection{The graph signal}
Consider a weighted, undirected, and connected graph $\mathcal{G}=\{\mathcal{V}, \mathcal{E}, \mathrm{W}\}$, where $\mathcal{V}=\{v_0,v_1,\cdots,v_{N-1}\}$ with $|\mathcal{V}|=N$ represents a finite set of vertices, $\mathcal{E}$ denotes the set of edges, and $\mathrm{W}$ represents the weighted adjacency matrix. In this graph, if there exists an edge $e=(i,j)$ connecting vertices $i$ and $j$, the weight of the edge is denoted by $w_{i,j}$; otherwise, $w_{i,j}=0$ \cite{shuman2013emmerging}.

The non-normalized graph Laplacian is a symmetric difference operator $\mathcal{L}=\mathbf{D}-\mathbf{W}$, where $\mathbf{D}$ represents a diagonal degree matrix of the graph $\mathcal{G}$. The $i$-th diagonal element $d_i$ of $\mathbf{D}$ is equal to the sum of the weights of all the edges incident to vertex $i$. 
Since the matrix $\mathcal{L}$ is real and symmetric, it possesses a complete set of orthonormal eigenvectors denoted as $\{\mathbf{u}_k\}_{k=0,1,\cdots,N-1}$. These eigenvectors are sorted in ascending order according to their corresponding eigenvalues $0=\widetilde{\lambda}_0<\widetilde{\lambda}_1\leq \widetilde{\lambda}_2 \leq \cdots \leq \widetilde{\lambda}_{N-1}$.
Thus, using the unitary column matrix $\mathbf{U}=\left[\mathbf{u}_0,\mathbf{u}_1,\cdots,\mathbf{u}_{N-1}\right]$ and the diagonal matrix $\boldsymbol{\Lambda}=\mathrm{diag}\left(\left[\widetilde{\lambda}_0, \widetilde{\lambda}_1, \cdots, \widetilde{\lambda}_{N-1}\right]\right)$, we can express 
\begin{align}
	\mathcal{L}=\mathbf{U} \boldsymbol{\Lambda} \mathbf{U}^H
\end{align}
where the superscript $H$ represents the Hermitian transpose operation. 
While we use the non-normalized graph Laplacian $\mathcal{L}$ for explanation in this article, we can still use the normalized graph Laplacian $\mathcal{L}_{\text{norm}} = \mathbf{E}_N - \mathbf{D}^{-1/2} \mathbf{W} \mathbf{D}^{-1/2}$, where $\mathbf{E}_N$ represents an N-dimensional identity matrix\cite{shuman2020localized}.

A signal defined on the graph $\mathcal{G}$ can be represented as a mapping \cite{yan2021windowed}
\begin{align}
	{f}: \mathcal{V} & \rightarrow \mathbb{R} \nonumber \\
	v_i & \mapsto f(i). 
\end{align}
Alternatively, we can express $\mathbf{f}$ as a real-valued vector $\mathbf{f}=\left[f(0), f(1),\cdots,f(N-1)\right]^T \in \mathbb{R}^N$, where $f(i)$ represents the value associated with the $i$-th vertex.
The Laplacian matrix is an operator that captures the local smoothness of a graph signal. When it acts on the graph signal $f$, it is defined as
\begin{align}
	(\mathcal{L} \mathbf{f})(i)=\sum_{j \in \mathcal{N}i} w_{i, j}[f(i)-f(j)]
\end{align}
where the neighborhood $\mathcal{N}_i$ represents the set of vertices connected to vertex $i$ by an edge. This operator quantifies the difference in signal between the central node and its neighboring nodes.

\begin{example}
	\label{exam1}
	We simulate two distinct graphs: a 128-node sensor network graph (Fig. \ref{fig1}. (a)) and a 256-node Swiss roll graph (Fig. \ref{fig1}. (b)). We consider the random signal for both cases, as shown in Fig. \ref{fig1} (c) and (d), respectively.
\end{example}

\subsection{The graph fractional Fourier transform}
\subsubsection{Graph Fourier transform}
For any function $\mathbf{f}\in \mathbb{R}^N$ defined on the vertices of $\mathcal{G}$, its Graph Fourier Transform (GFT) is defined as  \cite{shuman2013emmerging}
\begin{align}
	\label{eq3}
	\hat{f}(k) = \langle f, u_k\rangle = \sum_{n=0}^{N-1} f(n)u_k^*(n), \quad k=0,1,\cdots,N-1.
\end{align}
The inverse GFT is given by
\begin{align}
	f(n) = \sum_{k=0}^{N-1} \hat{f}(k)u_k(n), \quad n=0,1,\cdots,N-1.
\end{align}
Therefore, we can observe that the GFT can be represented using the matrix $\mathbf{U}$, such that $\hat{\mathbf{f}} = \mathcal{F}\mathbf{f} = \mathbf{U}^H \mathbf{f}$ \cite{ortega2018graph}. In the case where the GFT matrix $\mathcal{F}$ is ortho-diagonalized, it can be expressed as $\mathcal{F}=\mathbf{U}^H=\mathbf{Q} \mathbf{R} \mathbf{Q}^H$, where the matrices $\mathbf{Q}=\left[\mathbf{q}_0, \mathbf{q}_1, \cdots, \mathbf{q}_{N-1}\right]$ and $\mathbf{R}=\mathrm{diag}\left( \left[r_0, r_1, \cdots, r_{N-1}\right] \right)$ are obtained through the spectral decomposition of the GFT matrix.

\subsubsection{Graph fractional Fourier transform} \label{sec2.2.2}
By denoting $\mathbf{J}=\mathrm{diag}\left( \left[\zeta_0, \zeta_1, \cdots, \zeta_{N-1}\right] \right)=R^{\alpha}$, where $\zeta_l=r_l^{\alpha}$, the graph fractional Fourier transform (GFrFT) is defined as $\hat{\mathbf{f}}_{\alpha} = \mathcal{F}_{\alpha}\mathbf{f} = \mathbf{Q} \mathbf{J} \mathbf{Q}^H \mathbf{f}$ \cite{wang2017fractional}. There is \cite{yan2021windowed}
\begin{align}
	\hat{f}_{\alpha}(k) 
	= \sum_{n=0}^{N-1}\sum_{l=0}^{N-1}  f(n) q_l (k) \zeta_l q_l^*(n) 
\end{align}
where the papameter  $\alpha$ is the order of the fractional Fourier transform, and $k=0,1,\cdots,N-1$.
Similar to (\ref{eq3}), we define the graph fractional Laplacian operator $\mathcal{L}_{\alpha}$ as follows
\begin{align}
	\label{eq8}
	\mathcal{L}_{\alpha}=\mathcal{F}_{\alpha}^H \boldsymbol{\Lambda} \mathcal{F}_{\alpha}.
\end{align}
The inverse GFrFT is given by
\begin{align}
	f(n) = \sum_{k=0}^{N-1}\hat{f}_{\alpha}(k) q_l^* (k) \zeta_l q_l(n) , \quad n=0,1,\cdots,N-1.
\end{align}

\subsection{Linear canonical transform}
The linear canonical transform (LCT) of the signal $f(t) \in L^2(\mathbb{R})$ with the parameters $A=(a,b,c,d)$, where $ad-bc=1$, is defined as follows \cite{pei2002, Healy2016, Pei2001}
\begin{align} \label{eq1}
	& L^{A}\{f\}(u) 
	= \begin{cases}  \int_{-\infty}^{\infty} f(t) K_{A}(t, u)\mathrm{d} t & b \neq 0 \\
		\sqrt{d}  e^{ \mathbf{i} \left(cd/2 \right)u^2} f(dt) & b=0\end{cases}  \\
	& K_{A}(t, u)= \sqrt{\frac{1}{\mathbf{i}2\pi b}} e^{ \mathbf{i} \left(\frac{d}{b} u^2-\frac{2}{b} u t+\frac{a}{b} t^2\right) }
\end{align}
where $\mathbf{i}$ reprents an imaginary unit.
When the parameter $\mathrm{A}=({\cos}\alpha, {\sin}\alpha, -{\sin}\alpha, {\cos}\alpha)$ is used, the LCT becomes the $\alpha$-th order fractional Fourier transform (FrFT)
\begin{align}
	&L^{({\cos}\alpha, {\sin}\alpha, -{\sin}\alpha, {\cos}\alpha)}\{f\}(u)=\int_{-\infty}^{\infty} K_{\alpha}(t, u) f(t) \mathrm{d} t \nonumber \\
	&
	K_{\alpha}(t, u)= 
	\left\{\begin{array}{l}
		A_\alpha \exp \left\{\mathbf{i} \frac{u^2\cot \alpha}{2} -\mathbf{i} t u \csc \alpha+\mathbf{i} \frac{t^2\cot \alpha }{2} \right\} \\
		\alpha \neq n \pi \\
		\delta(u-t), \alpha=2 n \pi \\
		\delta(u+t), \alpha=2 n \pi-\pi
	\end{array}\right.
	\nonumber \\
	& A_\alpha=\sqrt{\frac{1-\mathbf{i}\cot (\alpha)}{2 \pi}}, 0<|\alpha|<\pi .
\end{align}

The LCT with parameter $A=(a,b,c,d)$ can be decomposed into a combination of various specialized transforms, achieved through the decomposition of the parameter matrix. This article will utilize the following decomposition
\begin{align} \label{eq12}
	\left[\begin{array}{ll}
		a & b \\
		c & d
	\end{array}\right]=\left[\begin{array}{ll}
		1 & 0 \\
		\xi & 1
	\end{array}\right]\left[\begin{array}{cc}
		\sigma & 0 \\
		0 & \sigma^{-1}
	\end{array}\right]\left[\begin{array}{cc}
		\cos \beta & \sin \beta \\
		-\sin \beta & \cos \beta
	\end{array}\right]
\end{align}
where $A$ is decomposed into FrFT, scaling, and chirp modulation (CM), and $\xi$ is the CM parameter, $\sigma$ denotes the scaling parameter, $\beta$ is the FrFT parameter. The parameter relations between $(a,b,c,d)$ and $(\xi,\sigma,\beta)$ are
$\xi=\frac{a c+b d}{a^2+b^2}, \sigma=\sqrt{a^2+b^2}, \beta=\cos ^{-1}\left(\frac{a}{\sigma}\right)=\sin ^{-1}\left(\frac{b}{\sigma}\right)$.
In Section \ref{sec3}, we will present the definition of graph linear canonical transform (GLCT) based on this parameter matrix factorization.

\section{Graph linear canonical transform} \label{sec3}
Building upon the centrality and scalability of the LCT eigendecomposition approach, Zhang proposes the definition of the GLCT based on the adjacency matrix by integrating the graph chirp-Fourier transform, graph scale transform, and graph fractional Fourier transform \cite{zhang2023discrete}. Similarly, this article proposes a GLCT based on the Laplacian matrix through decomposition of the LCT parameter matrix in (\ref{eq12}). However, there is a significant difference between the definition in this article and the definition in \cite{zhang2023discrete}.

Since any LCT matrix is non-commutative, it should be implemented in the order of FrFT, scaling, CM. To derive the definition of GLCT based on the Laplacian matrix, we accomplish it through the following three stages.
\begin{description}
	\item[\textbf{Step1}] \textbf{GFrFT stage} \\
	For a graph signal, initiate the GFrFT as delineated in Section \ref{sec2.2.2}, expressed as 
	\begin{align} \label{eq13}
		\hat{\mathbf{f}}_{\beta} = \mathcal{F}_{\beta}\mathbf{f} = \mathbf{Q} \mathcal{K} \mathbf{Q}^H \mathbf{f}
	\end{align}
	where $\mathcal{K}=\mathrm{diag}\left( \left[\kappa_0, \kappa_1, \cdots, \kappa_{N-1}\right] \right)=R^{\beta}$, $\kappa_l=r_l^{\beta}, l=0,\cdots,N-1$, and $\beta=\cos ^{-1}\left(\frac{a}{\sigma}\right)=\sin ^{-1}\left(\frac{b}{\sigma}\right)$.
	\item[\textbf{Step2}] \textbf{Graph scaling stage} \\
	In graph signal processing, defining scale transform differs from traditional signal processing methods. Hammond introduced the concept of scale on the graph by utilizing the scale property of the Fourier transform \cite{hammond2011wavelets}, defining the scale operators at scale $s$ as $S_f^s=f(s\mathcal{L})$.
	Therefore, this article considers the scale Laplacian matrix $\mathbf{S}={\sigma} \mathcal{L}$. \\
	Pei noted that the scaled signal can be derived by decomposing the signal into orthogonal Hermite eigenspaces and subsequently reconstructing the signal using scale Hermite eigenvectors with identical inner product coefficients \cite{pei2011discrete}. We employ this concept to define the graph scale transform. \\
	The graph spectrum decomposition of $\mathbf{S}$ is represented as 
	$\mathbf{S}=\mathbf{Y} \mathbf{X} \mathbf{Y}^H$, and
	$\mathbf{Y}^H=\mathbf{P} {\Xi} \mathbf{P}^H$, where $\mathbf{P}=\left[\mathbf{p}_0, \mathbf{p}_1, \cdots, \mathbf{p}_{N-1}\right]$ and $ {\Xi}=\mathrm{diag}\left( \left[\iota_0, \iota_1, \cdots, \iota_{N-1}\right] \right)$. 
	The graph scale transform is defined as 
	\begin{align}\label{eq14}
		{ST}_{\sigma}\mathbf{f}=\mathbf{P}\mathbf{Q}^H\mathbf{f}
	\end{align} 
	where $\sigma=\sqrt{a^2+b^2}$.
	\item[\textbf{Step3}] \textbf{Graph CM stage} \\
	In classical signal processing, the modulation operator is defined by $M_{\xi}f=e^{\mathbf{i}\pi \xi t}f(t)$. Shuman defined the modulation operator on the graph as $M_{k}f={f(n)} {u}_{k}^2(n)$, where $k \in \{0,\cdots,N-1\}$, recognizing that the essence of the modulation operator lies in multiplying by the Laplacian eigenvectors. \\
	However, the scenario addressed in this article pertains to CM, where $CM_{\xi}f=e^{\mathbf{i}\pi \xi t^2}f(t)$. The concept of CM on the graph has not yet been clearly defined, and defining it from a practical perspective is quite challenging. Hence, we define it based on the chirp signal spectrum perspective.
	
	In traditional signal processing, the Fourier spectrum of a chirp signal $s(t)=\operatorname{rect}\left(\frac{t}{T}\right) e^{j \pi \xi t^2}$ can be approximated as $F\{s\}(u) \approx \operatorname{rect}\left(\frac{u}{|\xi| T}\right) e^{-j \pi \frac{u^2}{\xi}}$ \cite{cumming2005digital}. 
	Similarly, we define a signal in a graph spectral domain as 
	\begin{align}
		&\hat{s}_{\xi}(k)= 
		\left \{
		\begin{array}{ll}
			e^{j \frac{\widetilde{\lambda}_k^2}{\xi}},                    & \xi \ne 0\\
			\mathbf{1}_N,    & \xi=0
		\end{array}
		\right.
	\end{align}
	where $\mathbf{1}_N$ represents an N-dimensional column vector with all elements being 1. 
	Next, the inverse GFT is performed to obtain its vertex domain form as $s(n) = \mathbf{U} \hat{s}_{\xi}(k)$.
	We utilize this signal as the CM term for the graph signal. Let 
	\begin{align}
		\Upsilon = \mathrm{diag} \left(\mathbf{U} \hat{s}_{\xi}(k)\right)
	\end{align}
	thus, defining the CM operator on the graph as 
	\begin{align} \label{eq15}
		CM_{\xi}\mathbf{f}= \Upsilon\mathbf{f}
	\end{align}
	where $\xi=\frac{a c+b d}{a^2+b^2}$.
\end{description}

Based on the aforementioned three stages, we define the GLCT by performing FrFT in (\ref{eq13}), scaling in (\ref{eq14}), and CM in (\ref{eq15}) on a graph signal $\mathbf{f}$.

\begin{defn}
	For the parameter $A=(a,b,c,d)$ of LCT, denote $\sigma=\sqrt{a^2+b^2}$, $\xi=\frac{ac+bd}{\sigma^2}$, $\beta=\cos^{-1}\left(\frac{a}{\sigma}\right)=\sin^{-1}\left(\frac{b}{\sigma}\right)$.
	The graph linear canonical transform (GLCT)  is defined by
	\begin{align}
		\hat{\mathbf{f}}_{A} = \mathcal{F}_{A}\mathbf{f} 
		= &\Upsilon (\mathbf{P}\mathbf{Q}^H)(\mathbf{Q} \mathcal{K} \mathbf{Q}^H) \mathbf{f} 
		=  \Upsilon \mathbf{P} \mathcal{K} \mathbf{Q}^H \mathbf{f}.
	\end{align}
	Its algebraic form is
	\begin{align}
		\hat{f}_{A}(k) 
		= \sum_{n=0}^{N-1} \sum_{l=0}^{N-1} f(n)  \epsilon_k p_l(k) \kappa_l q_l^*(n).
	\end{align}
	The corresponding eigenvalues are
	$\boldsymbol{\Lambda}_A
		=\mathrm{diag}\left(\left[{\lambda}_0, {\lambda}_1, \cdots, {\lambda}_{N-1}\right]\right)$,
	where ${\lambda}_l=\sigma \widetilde{\lambda}_l^{\beta}, l=0,\cdots,N-1$.
\end{defn}

Likewise, when $\mathrm{A}=({\cos}\alpha, {\sin}\alpha, -{\sin}\alpha, {\cos}\alpha)$ is used, the equation $\hat{f}_{A=({\cos}\alpha, {\sin}\alpha, -{\sin}\alpha, {\cos}\alpha)}(k)= \hat{f}_{\alpha}(k)$ holds true.
We can also define graph linear canonical Laplacian operator as
\begin{align}
	\mathcal{L}_{A}=\mathcal{F}_{A}^H \boldsymbol{\Lambda}_A \mathcal{F}_{A}.
\end{align}

The inverse GLCT (IGLCT) is defined as $\mathbf{f}=\mathcal{F}^{H}_A \hat{\mathbf{f}}_A= \mathbf{Q} \mathcal{K}^{H} \mathbf{P}^H \Upsilon^{H}\hat{\mathbf{f}}_A$, which is equal to
\begin{align}
	f(n) 
	= \sum_{k=0}^{N-1} \sum_{l=0}^{N-1} \hat{f}_{A}(k) 
	q_l(n) \kappa_l^{*} p_l^*(k) \epsilon_k^{*}.
\end{align}

\begin{example}
	Perform the GLCT with parameters $\sigma=1/3$, $\xi=0$, and $\beta=0.8$ on the graph and graph signal in Example \ref{exam1}. The results are depicted in Fig. \ref{fig1} (e) and (f), respectively.
\end{example}

\begin{example} \label{exam6}
	To further elucidate the role of GLCT, we visualize its feature vectors. Consider a K-nearest neighbors (K-NN) graph with 100 vertices, where each vertex connects to 4 edges. Figures \ref{fig1_2} (a-c) depict the 9th, 49th, and 89th eigenvectors of GFT. Figures \ref{fig1_2} (d-f) illustrate the 9th, 49th, and 89th eigenvectors of GLCT under parameters $\sigma=1$, $\xi=0.8$, and $\beta=0.95$. Figures \ref{fig1_2} (g-i) illustrate the 9th, 49th, and 89th eigenvectors of GLCT under parameters $\sigma=0.4$, $\xi=0$, and $\beta=0.9$.
	It can be observed that as the eigenvalues increase, the eigenvectors become less smooth. The GLCT can modify the smoothing mode by adjusting parameters, which enables a better fit for various graph signals.
\end{example}

\begin{figure}
	\centering
	\subfloat[]{  % []内可单独为每个小图命名。默认按照(a)(b)...的顺序命名，若省去[]则小图不命名。
		\includegraphics[width=1.8 in]{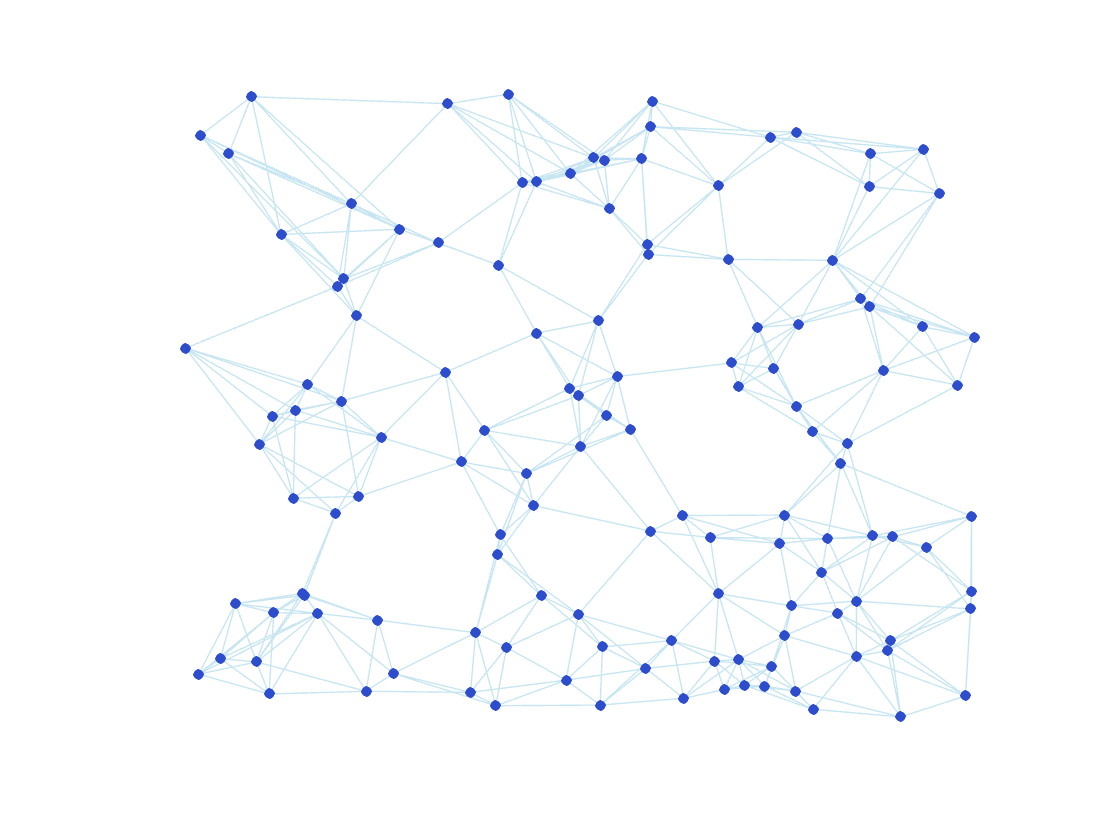}}
	\subfloat[]{
		\includegraphics[width=1.8 in]{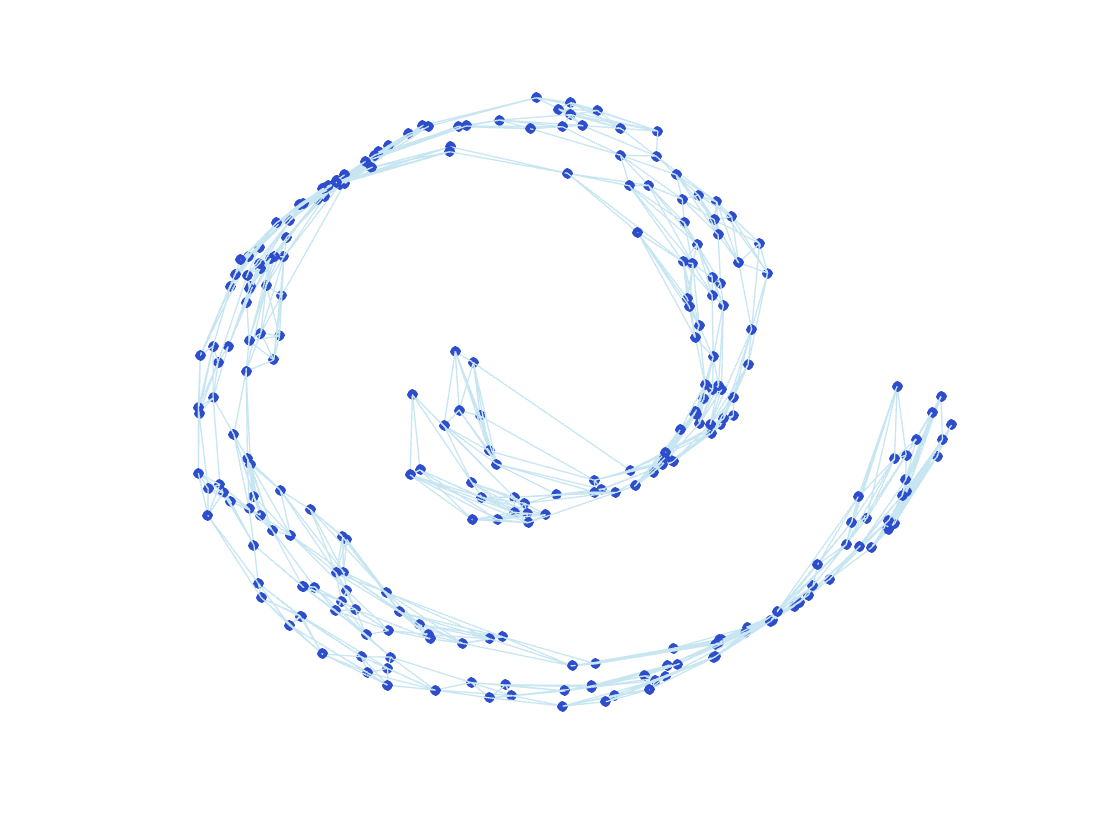}}
	\quad
	\subfloat[]{
		\includegraphics[width=1.8 in]{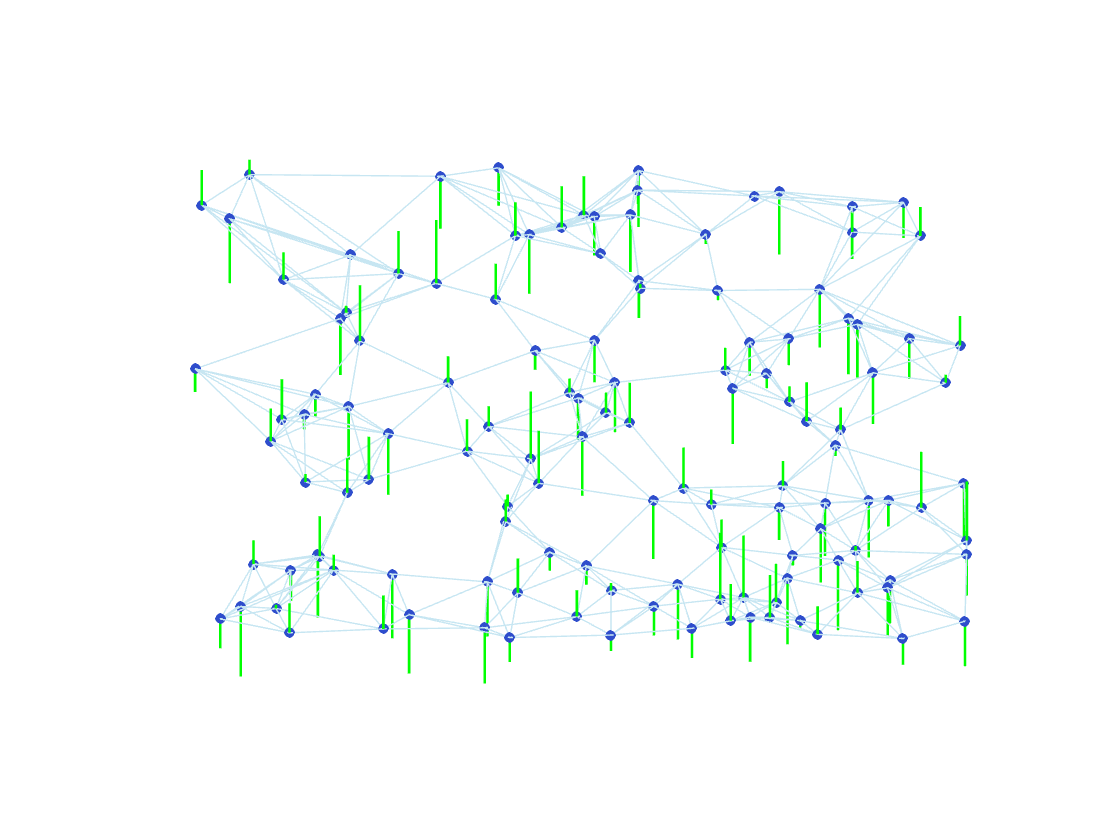}}
	\subfloat[]{
		\includegraphics[width=1.8 in]{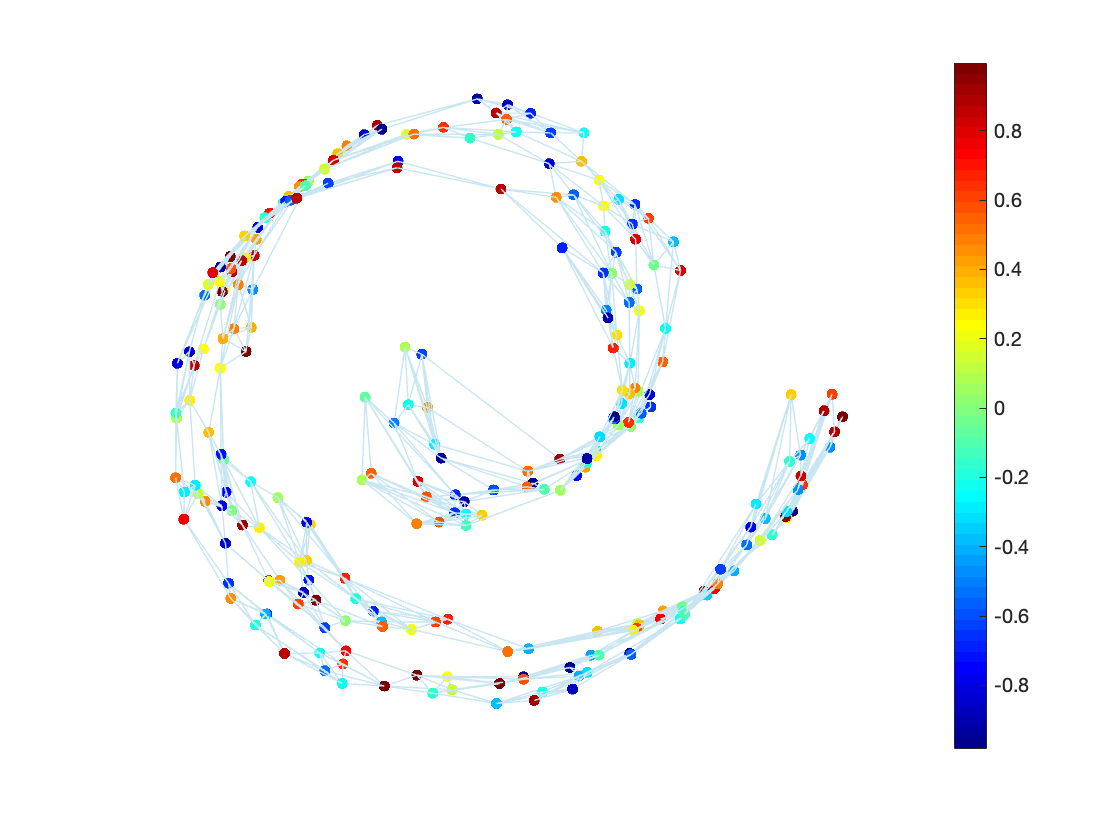}}
	\quad
	\subfloat[]{
		\includegraphics[width=1.8 in]{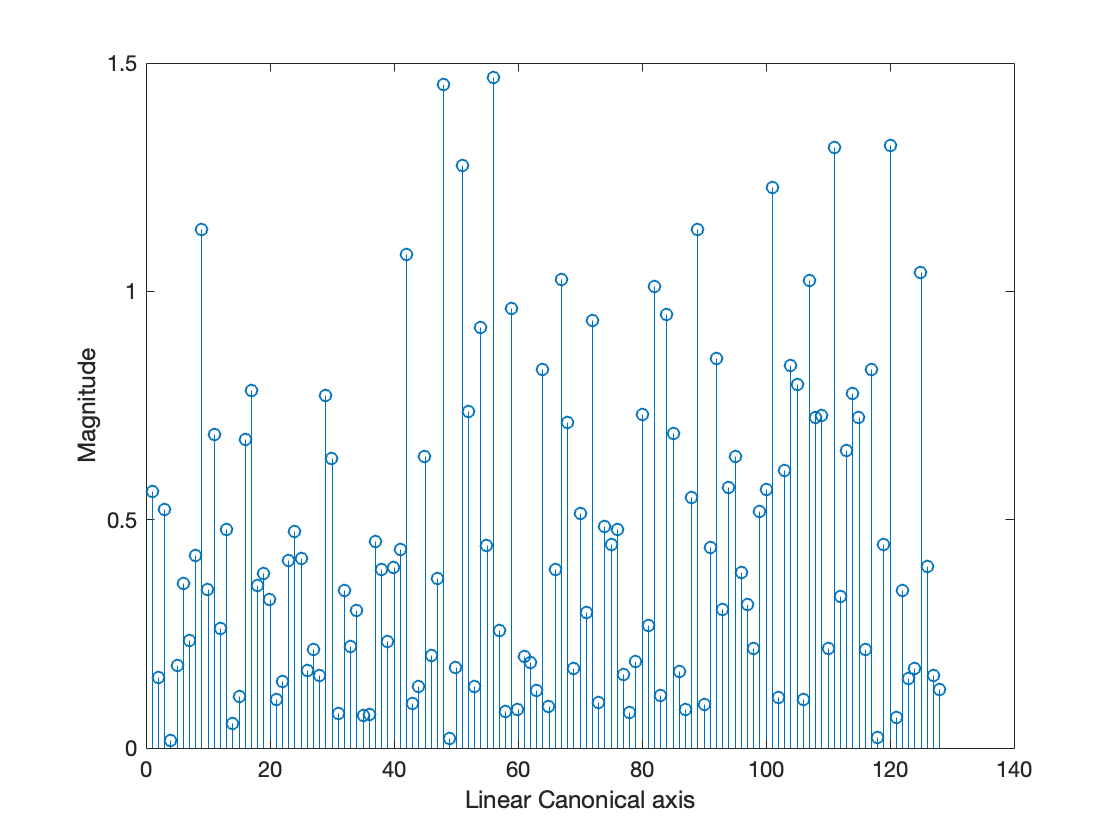}}
	\subfloat[]{
		\includegraphics[width=1.8 in]{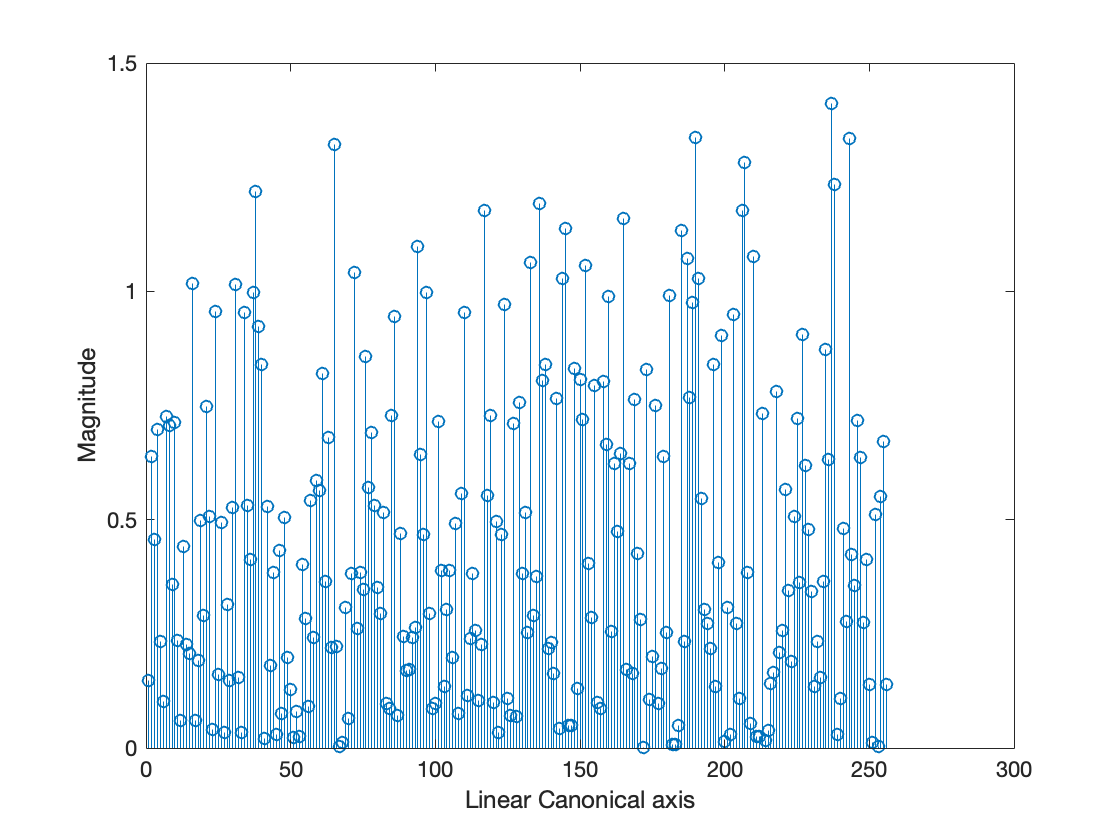}}
	\caption{(a) Sensor network graph. (b) Swiss roll graph. (c) A random signal on (a). (d) A random signal on (b). (e) The GLCT with $\sigma=1/3$, $\xi=0$, $\beta=0.8$ of (c). (f) The GLCT with $\sigma=1/3$, $\xi=0$, $\beta=0.8$ of (d).}\label{fig1}
\end{figure}

\begin{figure}
	\centering
	\subfloat[]{  % []内可单独为每个小图命名。默认按照(a)(b)...的顺序命名，若省去[]则小图不命名。
		\includegraphics[width=1 in]{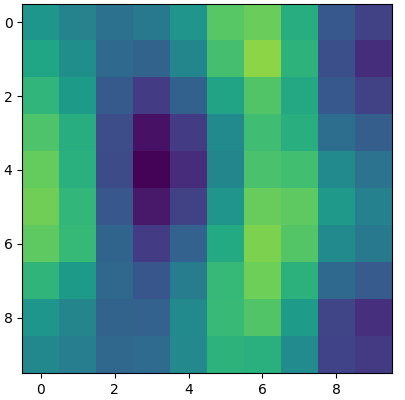}}
	\subfloat[]{
		\includegraphics[width=1 in]{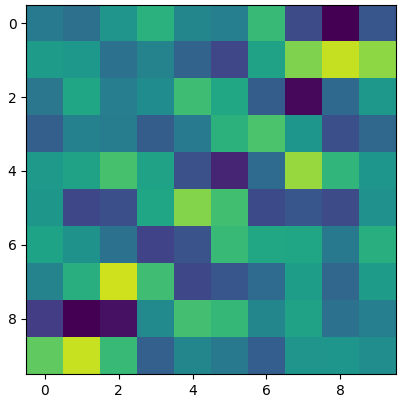}}
	\subfloat[]{
		\includegraphics[width=1 in]{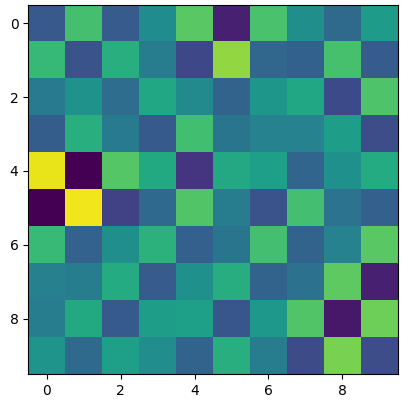}}
	\quad
	\subfloat[]{
		\includegraphics[width=1 in]{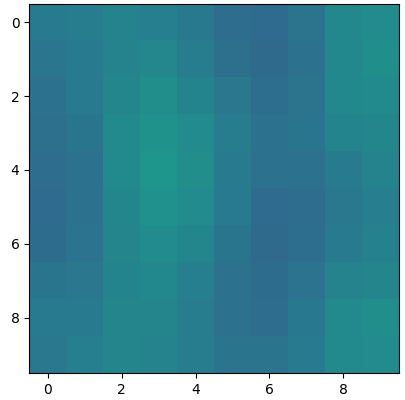}}
	\subfloat[]{
		\includegraphics[width=1 in]{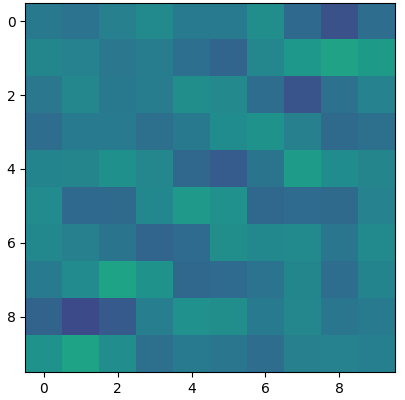}}
	\subfloat[]{
		\includegraphics[width=1 in]{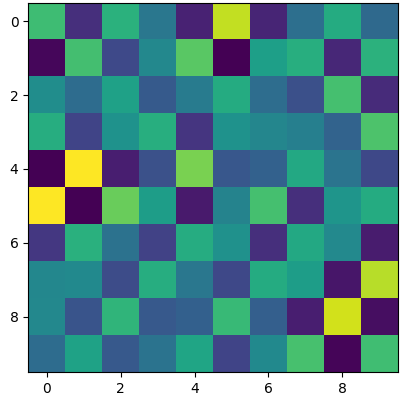}}
	\quad
	\subfloat[]{
		\includegraphics[width=1 in]{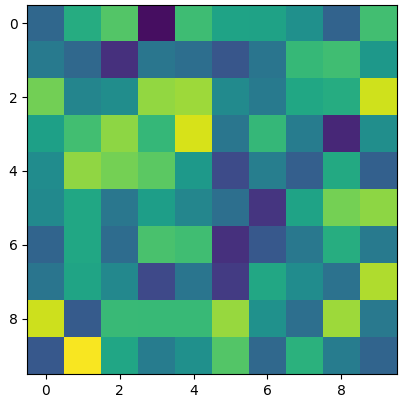}}
	\subfloat[]{
		\includegraphics[width=1 in]{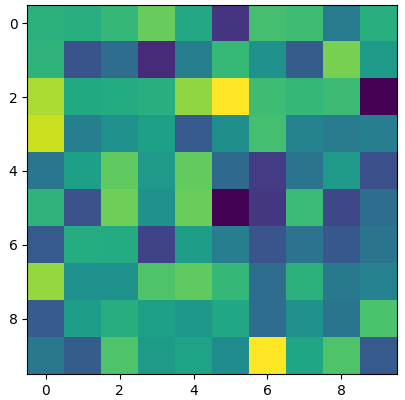}}
	\subfloat[]{
		\includegraphics[width=1 in]{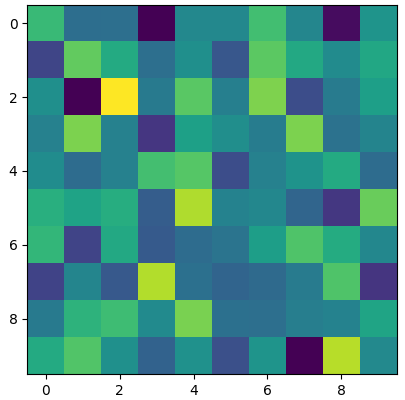}}
	\caption{Reshape the eigenvector to a 10×10 matrix. (a-c) The 9th, 49th, and 89th eigenvectors of the GFT. (d-f) The 9th, 49th, and 89th eigenvectors of GLCT with $\sigma=1$, $\xi=0.8$, $\beta=0.95$. (g-i) The 9th, 49th, and 89th eigenvectors of GLCT with $\sigma=0.4$, $\xi=0$, $\beta=0.95$. }\label{fig1_2}
\end{figure}

\section{Vertex-Frequency Analysis method associated with GLCT}
\label{sec4}
 Like classical time-frequency analysis, the spectral transformation of signals localized around a specific vertex $n$, forms the fundamental formula for vertex-frequency analysis. This spectral transformation is typically achieved using a localization window. 
Hence, a generalized method for vertex-frequency analysis, denoted as VGLCT, can be derived by combining the GLCT with local window functions \cite{stankovic2017vertex, stankovic2020vertex}. The VGLCT is obtained by computing the GLCT of a signal $f(n)$ and multiplying it by a suitable vertex localization window function, $h_i(n)$, resulting in the following expression
\begin{align}
	\mathcal{V}_h^Af(i,k)= \sum_{n=0}^{N-1} \sum_{l=0}^{N-1} f(n) h_i(n) \epsilon_k p_l(k) \kappa_l q_l^*(n).
\end{align}

In general, the VGLCT initially localizes the signal around vertex $i$ using the graph window function and then evaluates its GLCT performance. Specifically, the graph window function should have a value close to 1 at vertex $i$, indicating that the vertex is in close proximity to vertex $i$ or its nearest neighbors. Conversely, for vertices that are distant from vertex $i$, the graph window function should have a value close to 0, indicating negligible influence on the analysis.

We can categorize the methods for constructing the graph window into three categories:
\begin{itemize}
\item The window function is constructed based on the convolution and translation operators of GLCT, leading to the definition of windowed GLCT.
\item By utilizing the spectral shift form of the kernel of the bandpass function in the linear regular domain of the graph, we can define GLCT-based wavelet transform and S-transform.
\item A window function constructed based on the vertex neighborhood can define local GLCT.
\end{itemize}

\subsection{Windowed graph linear canonical transform} \label{sec3.1}
Unlike the traditional Fourier transform, the windowed Fourier transform (or short-time Fourier transform) divides the signal into a series of short-time windows and applies the Fourier transform to the signal within each window to obtain the spectral information of each window. In graph signal processing, the windowed graph Fourier transform (WGFT) has also been proposed \cite{shuman2012windowed, shuman2016vertex}. The basic idea is to define the WGFT as the inner product of the signal $f$ and a shifted and modulated window function $h$. The WGFT is defined by incorporating convolution, translation, and modulation operations into the graph signal \cite{yan2021windowed}. In this section, we begin by defining the graph linear canonical convolution operator and the graph linear canonical translation operator. Subsequently, we define the windowed graph linear canonical convolution transform (WGLCT) by multiplying the signal f with the translated window function prior to applying the GLCT.

\textbf{Graph linear canonical convolution operator.}
Consider two signals, $f(n)$ and $h(n)$, defined on a graph, with their corresponding GLCTs represented as $\hat{f}_A$ and $\hat{h}_A$. The graph linear canonical convolution $*_A$ of these signals is defined as follows
\begin{align} \label{eq9}
	&(f *_A h)(n) = IGLCT\{ \hat{f}_{A}(k) \hat{h}_{A}(k) \} \nonumber \\
	&= \sum_{k=0}^{N-1} \sum_{l=0}^{N-1} \hat{f}_{A}(k) \hat{h}_{A}(k) q_l(n) \kappa_l^{*} p_l^*(k) \epsilon_k^{*}.
\end{align}

The concept of 'shift' on a graph cannot be directly and uniquely extended to graph signals using an analogy with classical signal shifts. In the realm of vertex frequency analysis, there has been an intriguing effort to generalize convolution and define corresponding shift operators on graphs. In this context, we aim to further generalize the calculation of shifts based on graph linear regularization transforms.

\textbf{Graph linear canonical translation operator.}
For any signal $h(n)$ defined on a graph, and the delta function located at a vertex $ i \in \{0,1,\cdots,N-1\}$, given by $\delta_i(n)=\delta(n-i)$. The GLCT of delta function $\delta_i(n)$, is given by
\begin{align}
	\Delta_i(k)
	&=GLCT\{\delta_i(n)\}
	= \sum_{n=0}^{N-1} \sum_{l=0}^{N-1} \delta_i(n)  \epsilon_k p_l(k) \kappa_l q_l^*(n) \nonumber \\
	&= \sum_{l=0}^{N-1}\epsilon_k p_l(k) \kappa_l q_l^*(i).
\end{align}
The graph linear canonical translation $T_i^A$ is defined as
\begin{align}
	\label{eq2}
	&T_i^A h(n)=(h *_A \delta_i)(n) 
	= \sum_{k=0}^{N-1} \sum_{l=0}^{N-1} \hat{h}_{A}(k) \nonumber \\
	& \times
	\left(\sum_{s=0}^{N-1}\epsilon_k p_s(k) \kappa_s q_s^*(i)\right)
	 q_l(n) \kappa_l^{*} p_l^*(k) \epsilon_k^{*}.
\end{align}

\begin{figure}
	\centering
	\subfloat[]{  % []内可单独为每个小图命名。默认按照(a)(b)...的顺序命名，若省去[]则小图不命名。
		\includegraphics[width=1.6 in]{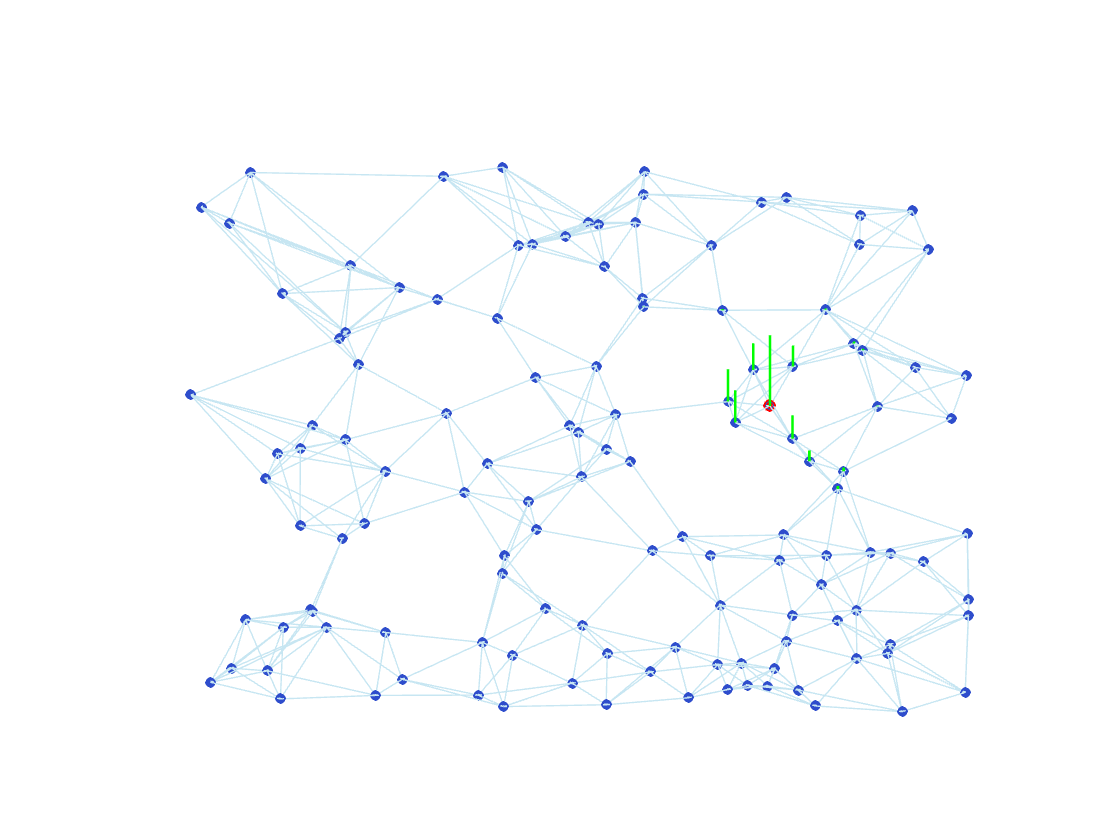}}
	\subfloat[]{
		\includegraphics[width=1.6 in]{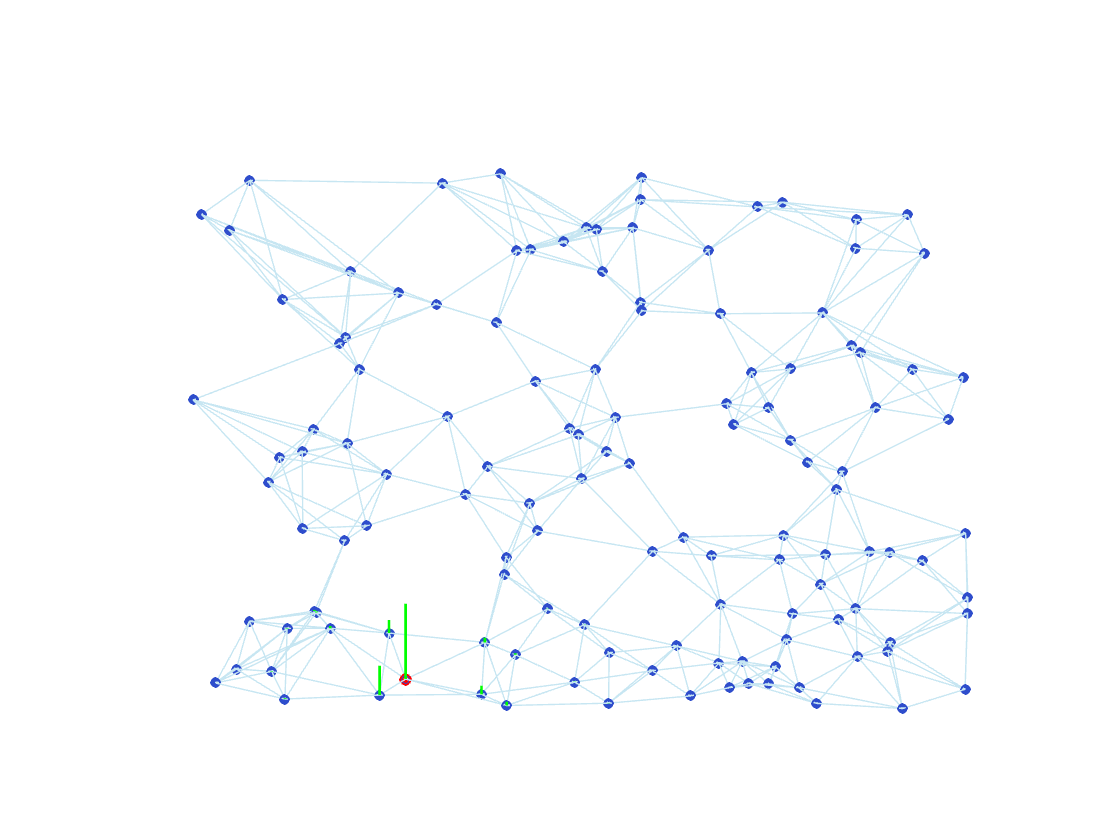}}
	\quad
	\subfloat[]{  % []内可单独为每个小图命名。默认按照(a)(b)...的顺序命名，若省去[]则小图不命名。
		\includegraphics[width=1.6 in]{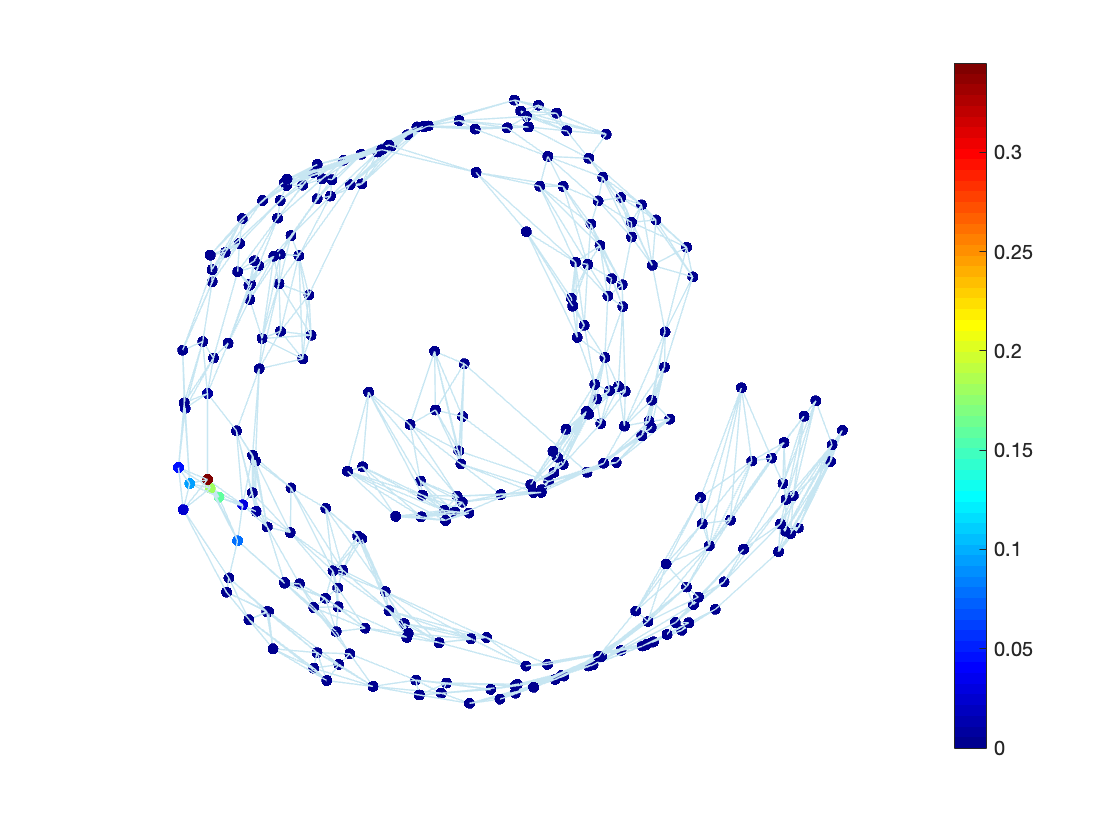}}
	\subfloat[]{
		\includegraphics[width=1.6 in]{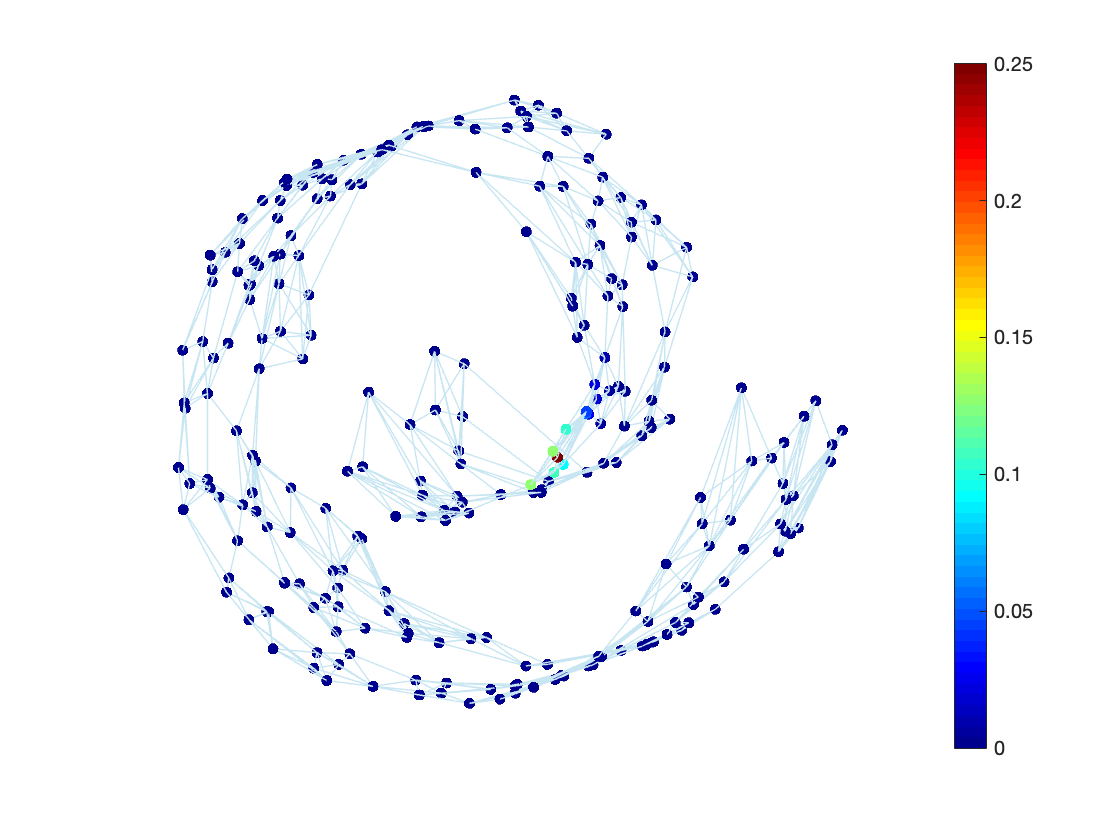}}
	\caption{Graph linear canonical translation. The red dot in (a) (b) represents the selected vertex $i$, while the green line represents the magnitude of the function value. (a) $T_{30}^A h$ on sensor network graph. (b) $T_{90}^A h$ on sensor network graph. (c) $T_{30}^A h$ on swiss roll graph. (d) $T_{200}^A h$ on swiss roll graph. }\label{fig2}
\end{figure}

Therefore, the WGLCT can be written as 
\begin{align}
	\mathcal{V}_{h,1}^Af(i,k)
	= \sum_{n=0}^{N-1} \sum_{l=0}^{N-1} f(n) T_i^A h^*(n) \epsilon_k p_l(k) \kappa_l q_l^*(n)
\end{align}
where $T_i^A h(n)$ is defined in (\ref{eq2}).
By applying the graph window function $T_i^A h(n)$ to the signal $f(n)$ on each vertex, and then summing over all vertices $i$, a constant overlap-add relation is achieved. Then the inverse WGLCT is
\begin{align}
	f(n) 
	= \frac{1}{\sum_{i=0}^{N-1}T_i^A h(n)}\sum_{k=0}^{N-1} \sum_{l=0}^{N-1} \mathcal{V}_{T_i^A h}^Af(i,k)
	q_l(n) \kappa_l^{*} p_l^*(k) \epsilon_k^{*}.
\end{align}

The basic form of this window, $h(n)$, can be conveniently defined in the spectral domain, for example as
$H(k) = c e^{-\lambda_k \tau}$,
where $c$ denotes the window amplitude and $\tau > 0$ is a constant which determines the window width.

\begin{example} \label{exam4}
	 Let $H(k) = e^{-\lambda_k/2}$ be the spectral domain-defined window, and $\sigma=1/3$, $\xi=0$, $\beta=0.8$. For the sensor network graph depicted in Fig. \ref{fig1} (a) in Example \ref{exam1}, we performed local point frequency analysis on vertex 30 and vertex 90, respectively. The results are displayed in Fig. \ref{fig2} (a) and (b). For the Swiss roll graph shown in Fig. \ref{fig1} (b), we conducted local point frequency analysis on vertex 30 and vertex 200, respectively. The corresponding results are presented in Fig. \ref{fig2} (c) and (d).
\end{example}

\begin{example} \label{exam5}
	Consider the real part of graph signal as $f_1(n)={\Re}(\varsigma_{30}(n)) + {\Re}(\varsigma_{80}(n))$
	for the sensor network graph on Fig. \ref{fig1} (a), where $\varsigma_{k}(n)=\sum_{l=0}^{N-1} \epsilon_{k} p_l(k) \kappa_l q_l^*(n)$ is derived from the graph linear canonical Laplacian matrix of the sensor network graph. Let $H(k) = e^{-400\lambda_k}$, $\sigma=1/3$, $\xi=0$, and $\beta=0.8$, the WGLCT of $f_1$ is depicted in Figure \ref{fig3} (a). 
\end{example}
\begin{example}
	Consider a signal on the Swiss roll graph shown in Fig. \ref{fig1} (b), $f_2(n)={\Re}(\varsigma_{20}(n)) + {\Re}(\varsigma_{50}(n))+{\Re}(\varsigma_{70}(n))$, where $\varsigma_{k}(n)=\sum_{l=0}^{N-1} \epsilon_{k} p_l(k) \kappa_l q_l^*(n)$. Here, $\varsigma_{k}(n)$ is derived from the graph linear canonical Laplacian matrix of the Swiss roll graph.
	Let $H(k) = e^{-3500\lambda_k}$, $\sigma=1/3$, $\xi=0$, and $\beta=0.8$, the WGLCT of $f_2$ is depicted in Figure \ref{fig3} (b). 
\end{example}

Below, we present a rapid computation approach for WGLCT.
In the traditional scenario of one-dimensional signals, the windowed Fourier transform can be computed by taking the inverse FT from the so-called $\alpha$-domain \cite{yan2021windowed, brown2009general, jestrovic2017fast}. The $\alpha$-domain is defined as the FT of the windowed Fourier domain. 
We define $\alpha$-domain in linear canonical domain by
\begin{align}
	\label{eq5}
	\alpha(u,u')=  \int_{-\infty}^{\infty} V_h^{A} f(l, u) K_{A}(l, u') {\rm d} l
\end{align}
where $V_h^{A} f(l, u)$ represents the windowed LCT of $f$ \cite{kou2012windowed}, that is, (\ref{eq5}) applies LCT to $V_h^{A} f(l, u)$. The fast WLCT can be obtained by applying inverse LCT to $\alpha$ on the variable $u'$
\begin{align}
	FV_h^{A} f(l, u) = \int_{-\infty}^{\infty} \alpha(u,u') K_{A^{-1}}(l, u') {\rm d} u'.
\end{align}

Similarly, we define the $\alpha$-domain as the GLCT of WGLCT in graph linear canonical domain as follows
\begin{align}
	\alpha(k,k')
	= \sum_{n=0}^{N-1} \sum_{l=0}^{N-1}\mathcal{V}_{T_i^A h}^Af(n,k')  \epsilon_k p_l(k) \kappa_l q_l^*(n).
\end{align}
Then, the discretization of $\alpha$ is 
\begin{align}
	\alpha(k,k')
	=& \sum_{n=0}^{N-1} \sum_{l=0}^{N-1}
	\sum_{m=0}^{N-1} \sum_{r=0}^{N-1} f(m)
	\sum_{d=0}^{N-1} \sum_{y=0}^{N-1} \sum_{s=0}^{N-1} \hat{h}_{A}^*(d) \nonumber \\
	& \times
	q_s(n) \kappa_s^{*} p_s^*(d) \epsilon_{d}^{*}
	\epsilon_{d} p_y(d) \kappa_y q_y^*(m)
	\nonumber \\
	& \times 
	\epsilon_{k'} p_r(k') \kappa_r q_r^*(m) 
	\epsilon_k p_l(k) \kappa_l q_l^*(n) \nonumber \\
	= & \sum_{m=0}^{N-1} \sum_{r=0}^{N-1} 
	\sum_{d=0}^{N-1} \sum_{y=0}^{N-1} \sum_{s=0}^{N-1}
	f(m)  \hat{h}_{A}(d)
	\epsilon_{k'} p_r(k') \kappa_r q_r^*(m) \nonumber \\
	&\times
	\epsilon_{d} p_y(d) \kappa_y q_y^*(m)
	\delta_{dk}
	\nonumber \\
	= & \sum_{m=0}^{N-1}  \sum_{r=0}^{N-1} \sum_{y=0}^{N-1} 
	f(m)  \hat{h}_{A}(k)
	\epsilon_{k'} p_r(k') \kappa_r q_r^*(m) \nonumber \\
	&\times
	\epsilon_{k} p_y(k) \kappa_y q_y^*(m).
\end{align}
The fast WGLCT is achieved by performing the inverse GLCT on the variable $k$ with respect to $\alpha$, that is
\begin{align}
	&F\mathcal{V}_{h,1}^Af(i,k') 
	= \sum_{k=0}^{N-1} \sum_{l=0}^{N-1} \alpha(k,k')
	q_l(n) \kappa_l^{*} p_l^*(k) \epsilon_k^{*} \nonumber \\
	&= \sum_{k=0}^{N-1} \sum_{l=0}^{N-1} 
	\tilde{f}(k,k')  \hat{h}_{A}(k)
	q_l(n) \kappa_l^{*} p_l^*(k) \epsilon_k^{*}
\end{align}
where 
\begin{align}
	\tilde{f}(k,k') 
	&=\sum_{m=0}^{N-1}  \sum_{r=0}^{N-1} \sum_{y=0}^{N-1} f(m) \epsilon_{k'} p_r(k') \kappa_r q_r^*(m) \nonumber \\
	& \times \epsilon_{k} p_y(k) \kappa_y q_y^*(m).
\end{align}

\begin{algorithm}[!h]
	\caption{Fast windowed graph linear canonical transform}%算法标题
	\begin{algorithmic}[1]%一行一个标行号
		\REQUIRE ~~\\
		Graph signal $f$. \\
		An $N\times N$ matrix $\mathbf{F}$ (each row of $\mathbf{F}$ is the signal $f$).\\
		\ENSURE ~~\\
		The fast WGLCT matrix $\mathbf{FV}_{h,2}^A$.
		\STATE Compute $\mathcal{F}_{A} =  \Upsilon \mathbf{P} \mathcal{K} \mathbf{Q}^H$.
		\STATE Calculate $\tilde{f}(k,k')$ using the expression $\tilde{\mathbf{F}} = \lceil \tilde{f}(k,k') \rceil = (\mathbf{F} \circ \mathcal{F}_{A}^H) \cdot \mathcal{F}_{A}$, where $\circ$ represents element-wise multiplication (Hadamard multiplication), and $\cdot$ denotes standard matrix multiplication.
		\STATE Construct a matrix where each row corresponds to the GLCT of the window function \\
		$\mathbf{H}_A=\left(\begin{array}{cccc}\hat{h}_A\left(r_0\right) & \hat{h}_A\left(k_1\right) & \ldots & \hat{h}_A\left(k_{N-1}\right) \\ \hat{h}_A\left(k_0\right) & \hat{h}_A\left(k_1\right) & \ldots & \hat{h}_A\left(k_{N-1}\right) \\ \vdots & \vdots & \ddots & \vdots \\ \hat{h}_A\left(k_0\right) & \hat{h}_A\left(k_1\right) & \ldots & \hat{h}_A\left(k_{N-1}\right)\end{array}\right)$.
		\STATE Compute the $\alpha$-domain representation by multiplying $\tilde{\mathbf{F}}$ and $\mathbf{H}_A^*$, i.e., $\alpha = \mathbf{H}_A^* \cdot \tilde{\mathbf{F}}$.
		\STATE The fast WGLCT at each vertex is obtained by applying inverse GLCT to $\sigma$, i.e., $\mathbf{FV}_{h,2}^A = \mathcal{F}_{A} \cdot \alpha$.
	\end{algorithmic}
\end{algorithm}

\begin{example}
	To compare the results of directly calculating the WGLCT with those obtained through the fast calculation method, we utilize the signals $f_1$ and $f_2$ from Example \ref{exam4} and Example \ref{exam5}, respectively. As depicted in Figure \ref{fig9}, it is evident that both the fast calculation and direct calculation methods yield the same results, indicating that they have an equivalent effect.
\end{example}

\begin{figure}
	\centering
	\subfloat[]{  % []内可单独为每个小图命名。默认按照(a)(b)...的顺序命名，若省去[]则小图不命名。
		\includegraphics[width=1.6 in]{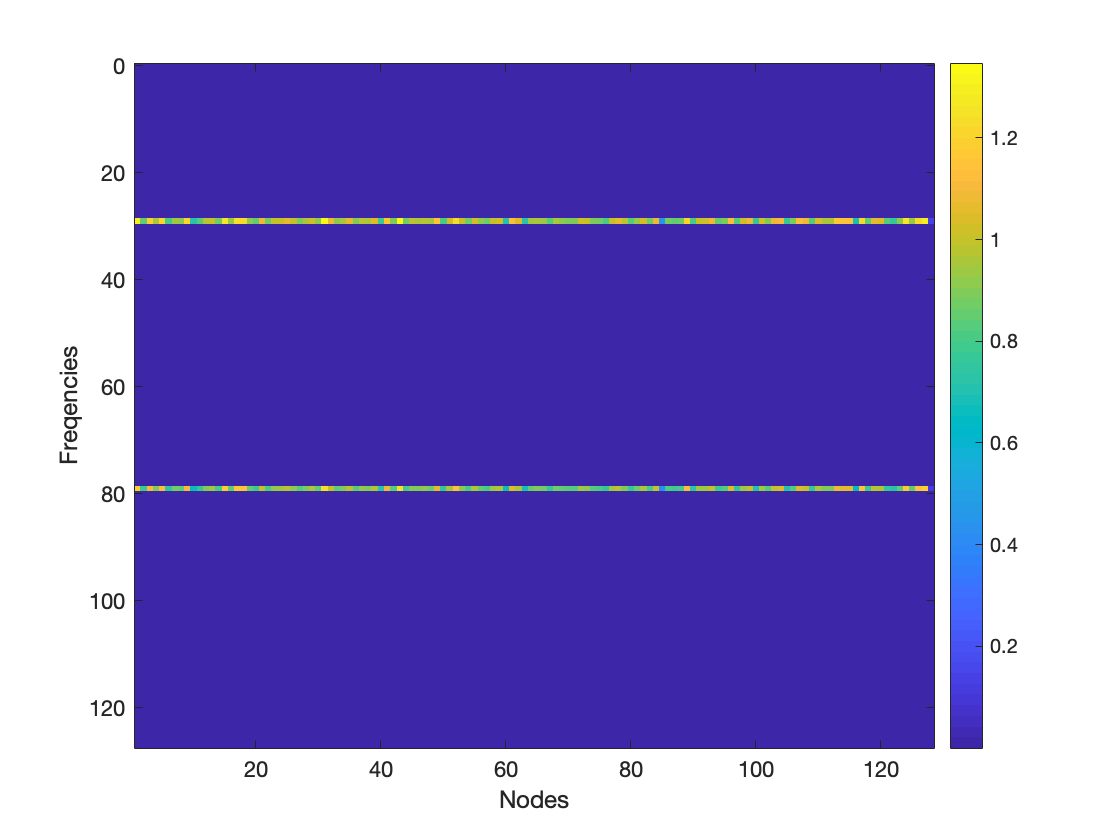}}
	\subfloat[]{
		\includegraphics[width=1.6 in]{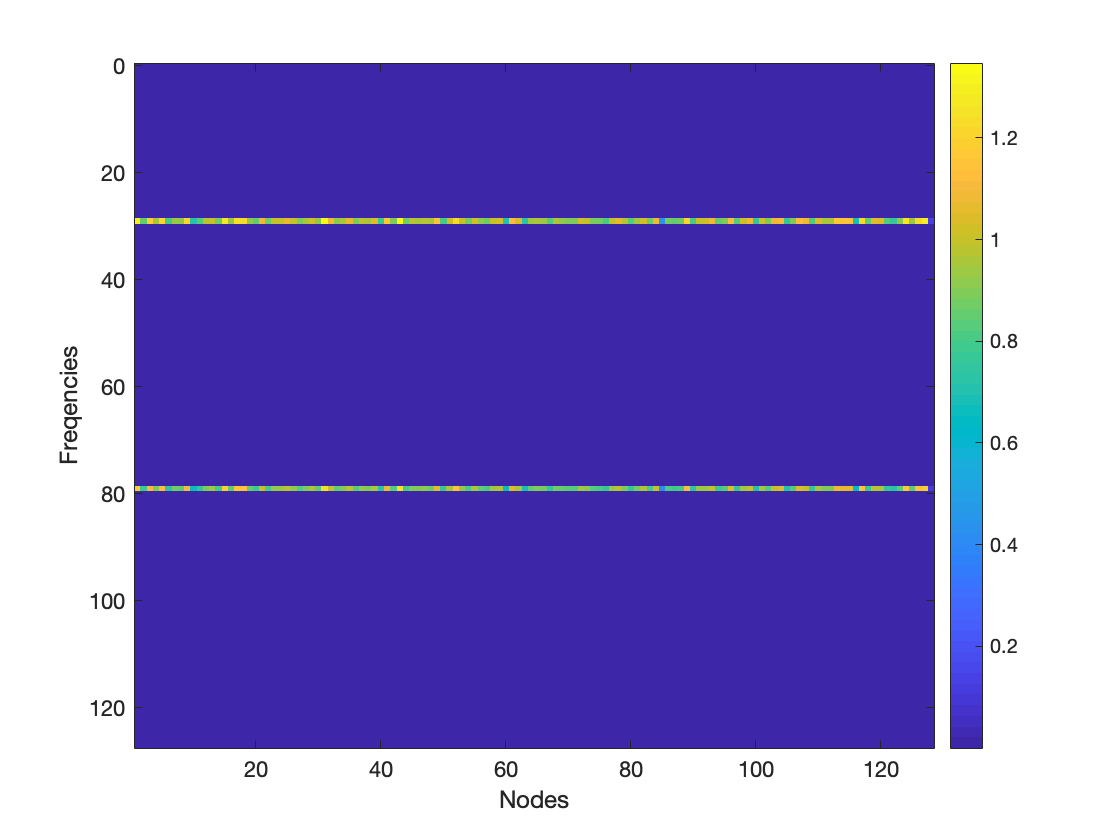}}
	\quad
	\subfloat[]{  
		\includegraphics[width=1.6 in]{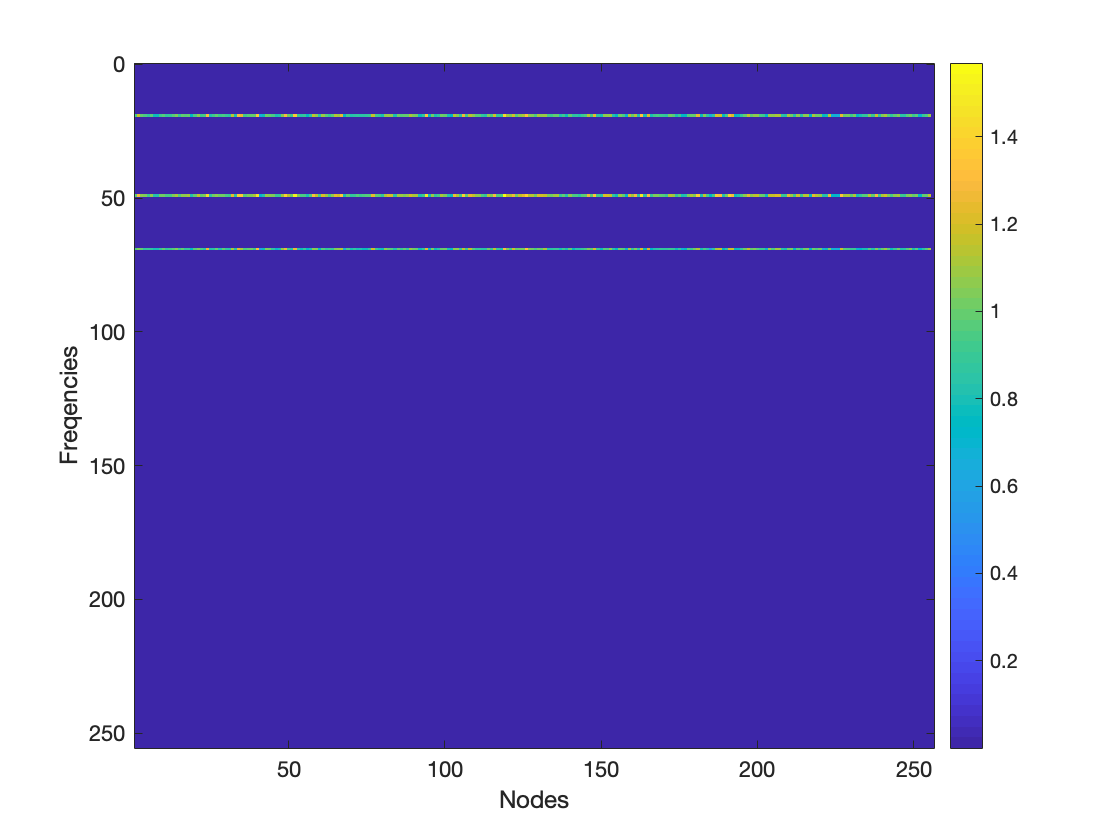}}
	\subfloat[]{
		\includegraphics[width=1.6 in]{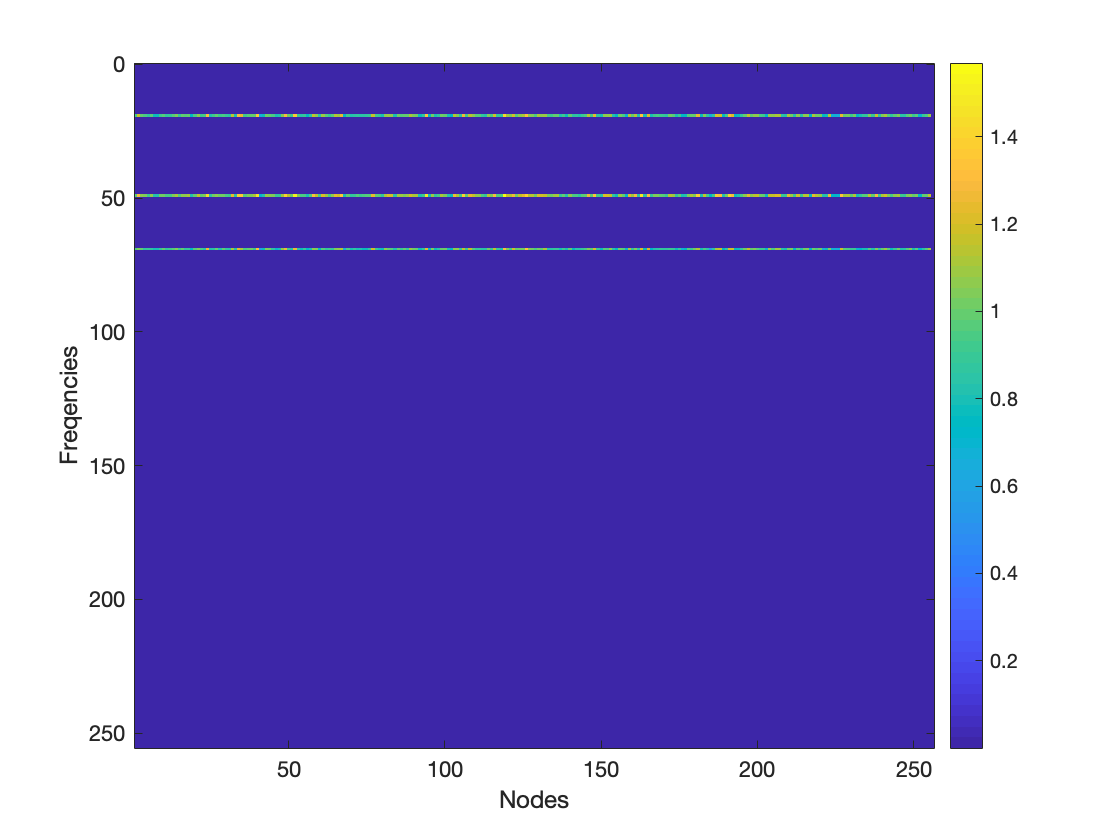}}
	\caption{(a)The WGLCT of $f_1$ in Example \ref{exam4}. (b)The fast WGLCT of $f_1$ in Example \ref{exam4}. (c)The WGLCT of $f_2$ in Example \ref{exam5}. (d)The fast WGLCT of $f_2$ in Example \ref{exam5}. }\label{fig9}
\end{figure}

\begin{figure}
	\centering
	\subfloat[]{  % []内可单独为每个小图命名。默认按照(a)(b)...的顺序命名，若省去[]则小图不命名。
		\includegraphics[width=1.6 in]{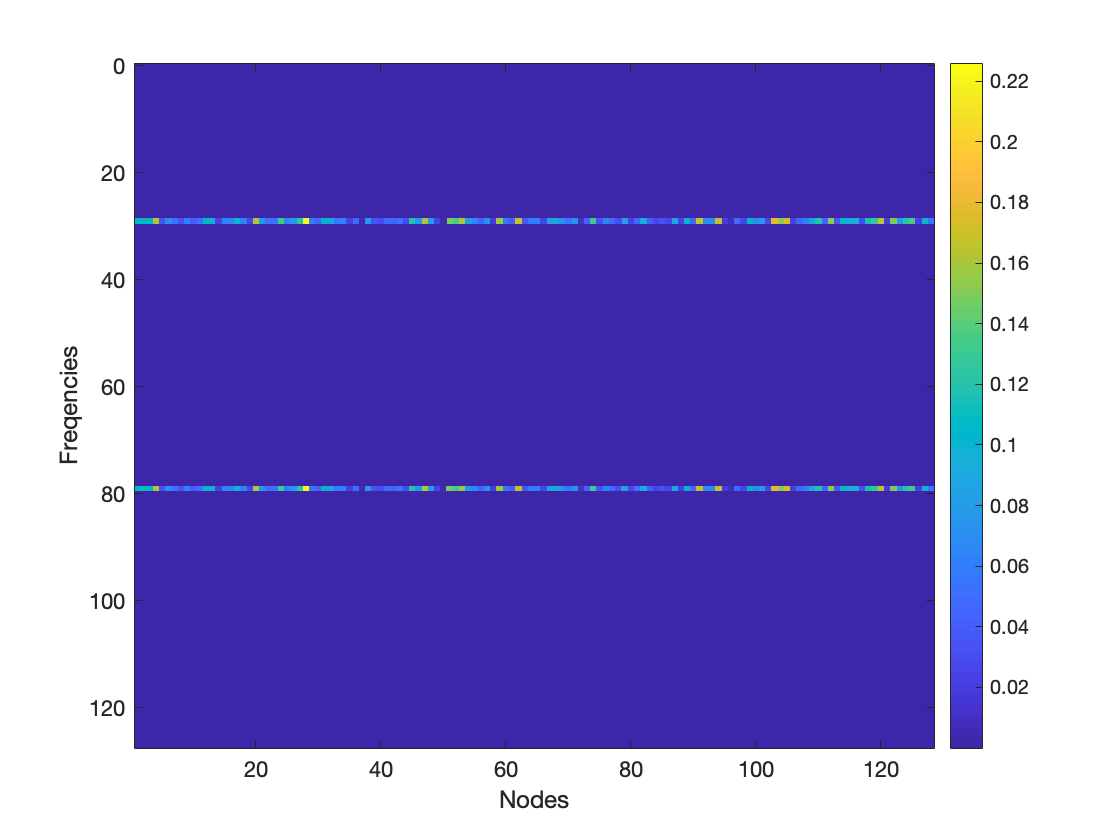}}
	\subfloat[]{
		\includegraphics[width=1.6 in]{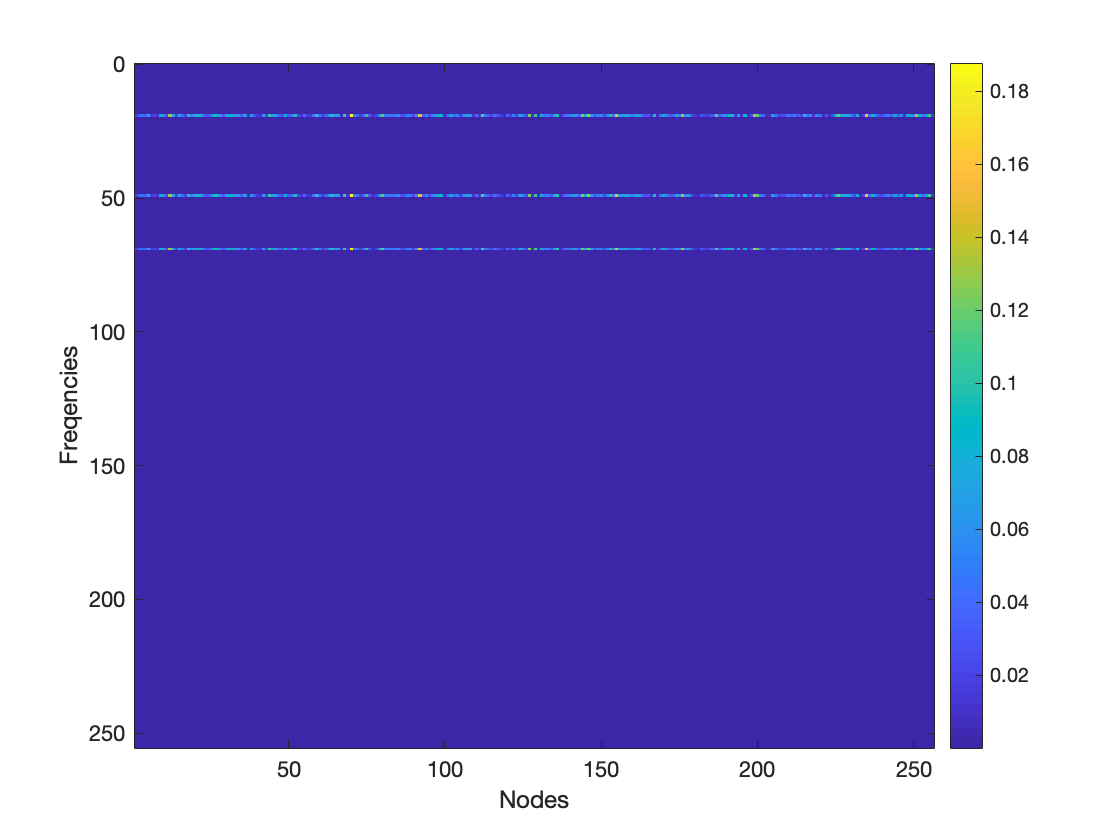}}
	\caption{The WGLCT of graph signals. (a) The WGLCT of a signal on sensor network graph. (b) The WGLCT of a signal on swiss roll graph. }\label{fig3}
\end{figure}

\subsection{Graph linear canonical domain localization} 
\label{sec3.2}
The classical time-frequency analysis method employs windows for frequency localization in the spectral domain, enabling the derivation of the dual form of the time-frequency analysis method using the DFT of the original signal and the spectral window. Similarly, for graph signals, we can apply this method to localize them in the graph linear canonical domain and obtain VGLCT. 
For graph signals, we can also employ this approach to achieve localization in the spectral domain. In this case, the VGLCT is obtained as the IGLCT of $\hat{f}(z)$, which is localized using a spectral domain window $H(k - z)$ centered around the spectral index $k$ \cite{stankovic2020vertex}. Thus, the VGLCT can be expressed as follows
\begin{align}
	\mathcal{V}_h^Af(i,k)
	= \sum_{p=0}^{N-1} \sum_{l=0}^{N-1} \hat{f}_{A}(p) H(k-z)
	q_l(n) \kappa_l^{*} p_l^*(z) \epsilon_z^{*}.
\end{align}
It can be imlemented using band-pass transfer functions, $H(k - z)=H_k(\lambda_z)$, as
\begin{align}
	\mathcal{V}_{h,2}^Af(i,k)
	= \sum_{p=0}^{N-1} \sum_{l=0}^{N-1} \hat{f}_{A}(z) H_k(\lambda_z)
	q_l(n) \kappa_l^{*} p_l^*(z) \epsilon_z^{*}.
\end{align}

The spectral domain localization window for a band-pass graph system can be expressed as a transfer function using eigenvalues $\lambda_z$ as the center and its surroundings \cite{stankovic2019vertex, stankovic2020vertex}. It is defined in a polynomial form, represented by the following equation
\begin{align}
	\label{eq6}
	H_k(\lambda_z)=h_{0,k}+h_{1,k}\lambda_z+\cdots+h_{C-1,k}\lambda_z^{C-1}
\end{align}
where $C$ reprents the polynomial order, and $k=0,1,\cdots,K-1$ with K being the number of bands.
Then the elements $\mathcal{V}_{h,2}^Af(i,k), i=0,1,\cdots,N-1$ of the VGLCT matrix $\mathbf{V}_{h,2}^A$ can be written in a matrix form
\begin{align}
	\mathbf{v}_{h,2}^{k,A} 
	&= IGLCT\{ \hat{f}_{A}(z) H_k(\lambda_z)\}
	=  \mathbf{Q} \mathcal{K}^{-1} \mathbf{P}^H \Upsilon^{-1}\hat{\mathbf{f}}_A  H_k(\boldsymbol{\Lambda}_A) \nonumber \\
	&=  \mathbf{Q} \mathcal{K}^{-1} \mathbf{P}^H \Upsilon^{-1}
		H_k(\boldsymbol{\Lambda}_A) 
		\Upsilon \mathbf{P} \mathcal{K} \mathbf{Q}^H \mathbf{f} \nonumber \\
	&=H_k(\mathcal{L}_{A}) \mathbf{f}
	= \sum_{m=0}^{C-1} h_{m,k}\mathcal{L}_{A}^m \mathbf{f}
\end{align}
where $\mathbf{v}_{h,2}^{k,A}, k=1, \cdots, K-1$ is the column vector with elements $\mathcal{V}_{h,2}^Af(i,k)$, $i=0,1,\cdots,N-1$, and $H_k(\boldsymbol{\Lambda}_A) $ is a diagonal matrix with elements $H_k(\lambda_z), z =1, 2,..., N$. In this scenario, the number of shifted windows, $K$, is not related to the total number of indices $N$.

Based on this formulation of the VGLCT, we can further define two specific vertex-frequency analysis methods: the graph linear canonical wavelet transform (GLWT) and the graph linear canonical stockwell transform (GLST).

\subsubsection{Graph linear canonical wavelet trasform}

In classical signal processing theory, time-frequency analysis and wavelet transform share common objectives. However, these two domains are typically treated independently. Classic time-frequency analysis aims to explore the frequency and time characteristics of non-stationary signals. This approach finds widespread application in signal processing, communication, and audio analysis for tasks like signal analysis, parameter estimation, detection, and denoising. On the other hand, wavelet analysis offers a multi-resolution approach that captures both local and global signal features. Through detailed and approximate decomposition of the signal, wavelet analysis enables tasks such as signal compression, denoising, and feature extraction.

In graph signal processing, it is often assumed that these two signal processing methods are equivalent. Therefore, only the GLWT, which is directly linked to frequency variation (VGLCT), is considered. It can be seen as a specific instance of vertex-frequency analysis, rather than a transform for graph signal compression and its wavelet-like multi-resolution analysis.

\textbf{Linear canonical wavelet operators.} 
Assuming a band-pass filter behavior, the wavelet definition in the spectral domain can be represented by $H(\lambda_z)$ as the basic form. We define the linear canonical wavelet operator at scale $s_j$ as 
\begin{align}
	\label{eq7}
	\widehat{T^{s_j}_Hf}(z)=H(s_j\lambda_z)\hat{f}(z)
\end{align}
where $j=1,\cdots,J$.
This operator is achieved by applying each linear canonical mode to a given function $f$.

Employing the IGLCT on (\ref{eq7}) yields
\begin{align}
	T_H^{s_j}f(z)= \sum_{z=0}^{N-1} \sum_{l=0}^{N-1} H(s_j\lambda_z)\hat{f}(z)
	q_l(n) \kappa_l^{*} p_l^*(z) \epsilon_z^{*}.
\end{align}
The linear canonical wavelets are then realized through localizing it by applying it to the impulse on a single vertex
\begin{align}
	\psi_{s_j,i}(n)
	& =T_H^{s_j}\delta_i(n)
	=\sum_{z=0}^{N-1} \sum_{l=0}^{N-1} H(s_j\lambda_z)
	 \nonumber \\
	&\times \left(\sum_{r=0}^{N-1}\epsilon_k p_r(k) \kappa_r q_r^*(i)\right) q_l(n) \kappa_l^{*} p_l^*(z) \epsilon_z^{*}.
\end{align}

The GLWT is defined as
\begin{align}
	&\mathcal{W}_{\psi}^A(i,s_j)=\sum_{n=0}^{N-1} f(n)\psi_{s_j,i}(n) \nonumber \\
	=&\sum_{n=0}^{N-1} \sum_{p=0}^{N-1} \sum_{l=0}^{N-1} 
	 f(n)  H(s_j\lambda_z)
	\left(\sum_{r=0}^{N-1}\epsilon_k p_r(k) \kappa_r q_r^*(i)\right) \nonumber \\
	&\times q_l(n) \kappa_l^{*} p_l^*(z) \epsilon_z^{*} \nonumber \\
	=&  \sum_{p=0}^{N-1} \sum_{l=0}^{N-1} H(s_j\lambda_z)\hat{f}_A(z) q_l(n) \kappa_l^{*} p_l^*(z) \epsilon_z^{*}.
\end{align}

\begin{example} \label{exam2}
	We use wavelet generation kernels similar to Meyer wavelets \cite{leonardi2011wavelet}, defined as
	\begin{align}
			H(\lambda)= \begin{cases}\sin \left(\frac{\pi}{2} \nu\left(\frac{1}{\lambda_1}|\lambda|-1\right)\right. & \text { if } \lambda_1 \leq \lambda \leq \lambda_2 \\ \cos \left(\frac{\pi}{2} \nu\left(\frac{1}{\lambda_2}|\lambda|-1\right)\right. & \text { if } \lambda_2 \leq \lambda \leq \lambda_3\end{cases}
	\end{align}
	where $v(x)=x^4(35-84x+70x^2-20x^3)$, and $\lambda_1=2/3$, $\lambda_2=2\lambda_1$, $\lambda_3=4\lambda_1$.  The $J$ wavelet scales are defined as $s_j=2^j\lambda_{max}^{-1}$ for $j=1,\cdots,J$, where $\lambda_{max}$ is obtained by the linear caonical Laplacian matrix (\ref{eq8}).
	To handle the low-pass spectral components (the interval for $\lambda$ closest to $ \lambda= 0$), the low-pass type scale function, $G(\lambda)$ is added in the form
	\begin{align}
		G(\lambda)= \begin{cases}1 & \text { if } \lambda \leq \lambda_1 \\ \cos \left(\frac{\pi}{2} \nu\left(\frac{1}{\lambda_1}|\lambda|-1\right)\right. & \text { if } \lambda_1 \leq \lambda \leq \lambda_2\end{cases}.
	\end{align}
	Let $J=4$, the generated linear regular wavelet kernel is shown in Fig. \ref{fig4}. (a). Assuming values of $\sigma=1$, $\xi=0$ and $\beta=0.98$, we can derive four scales: $s_1=1.693, s_2=0.846, s_3=0.423$, and $s_4=0.212$. By centering the wavelet on vertex 90, we can compute it and visualize it in Fig. \ref{fig4} (b)-(e).
\end{example}

\begin{figure}
	\centering
	\subfloat[]{  % []内可单独为每个小图命名。默认按照(a)(b)...的顺序命名，若省去[]则小图不命名。
		\includegraphics[width=3.5 in]{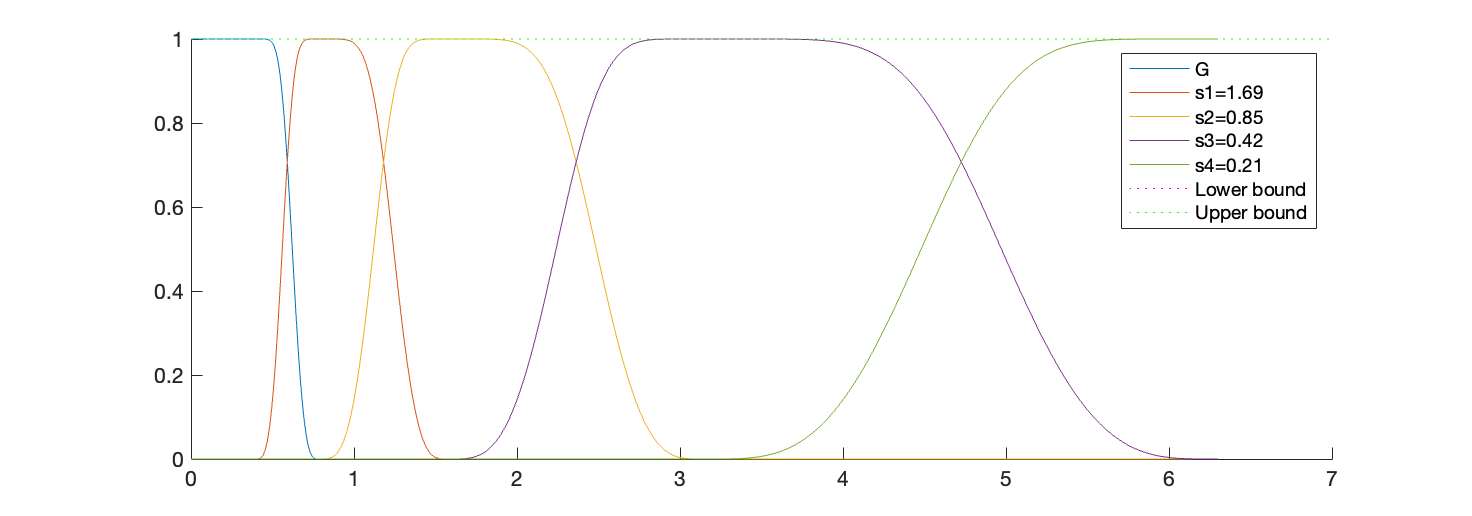}}
	\quad
	\subfloat[]{  % []内可单独为每个小图命名。默认按照(a)(b)...的顺序命名，若省去[]则小图不命名。
		\includegraphics[width=1.6 in]{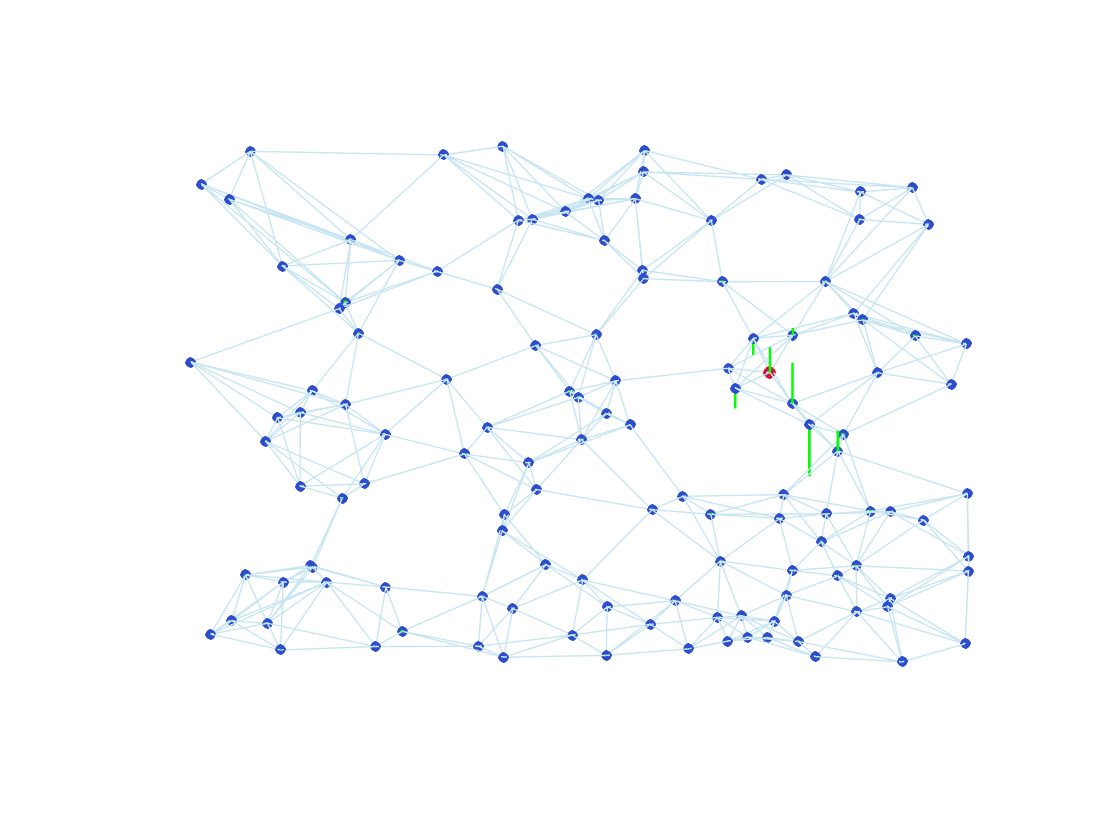}}
	\subfloat[]{
		\includegraphics[width=1.6 in]{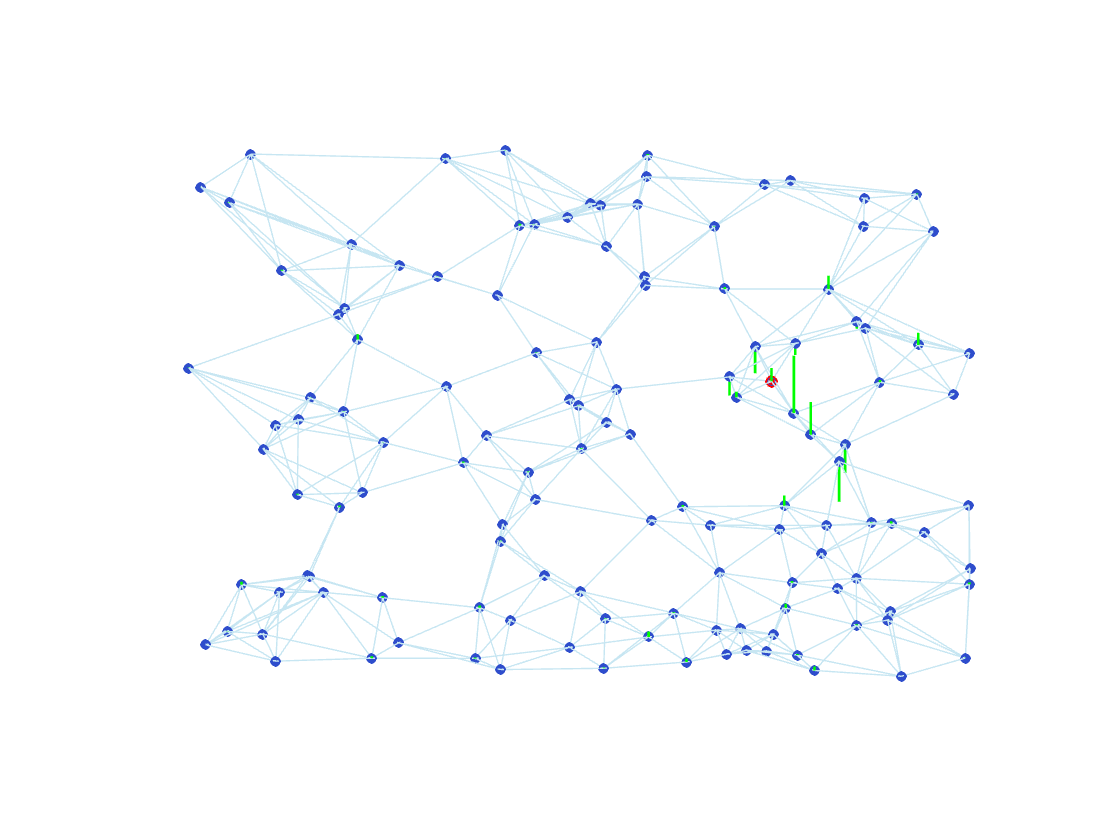}}
	\quad
	\subfloat[]{  % []内可单独为每个小图命名。默认按照(a)(b)...的顺序命名，若省去[]则小图不命名。
		\includegraphics[width=1.6 in]{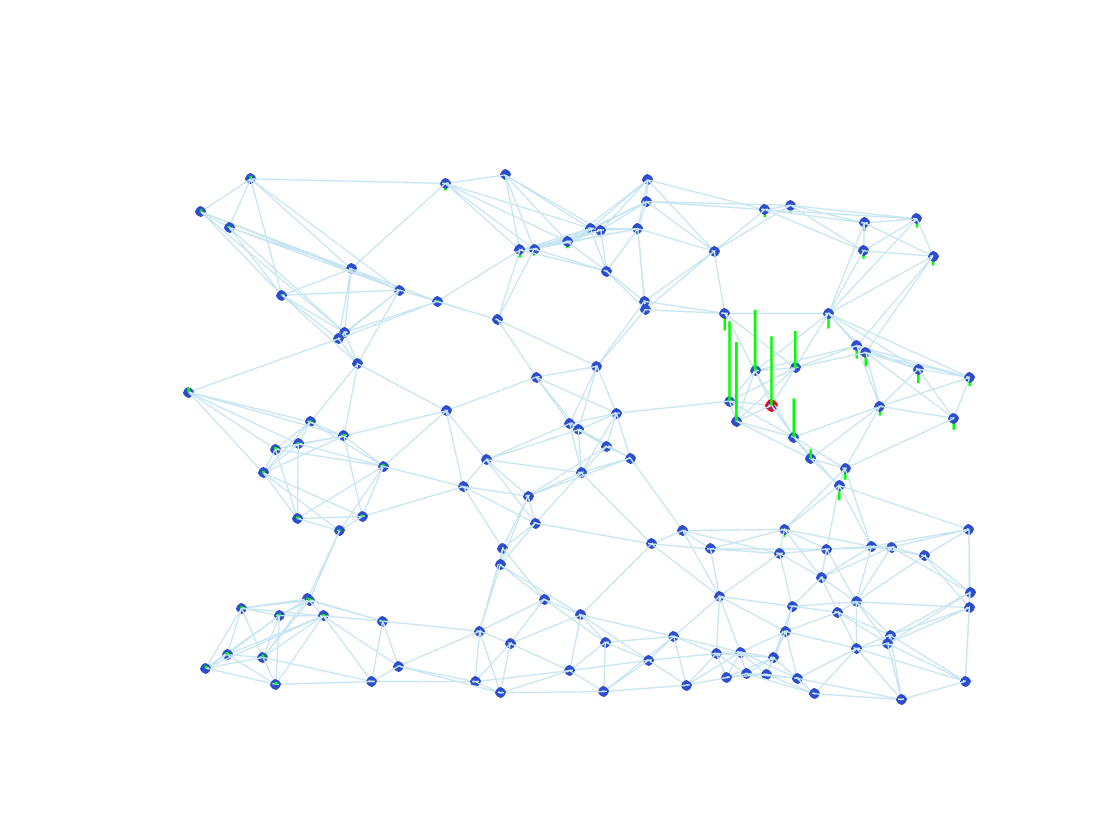}}
	\subfloat[]{
		\includegraphics[width=1.6 in]{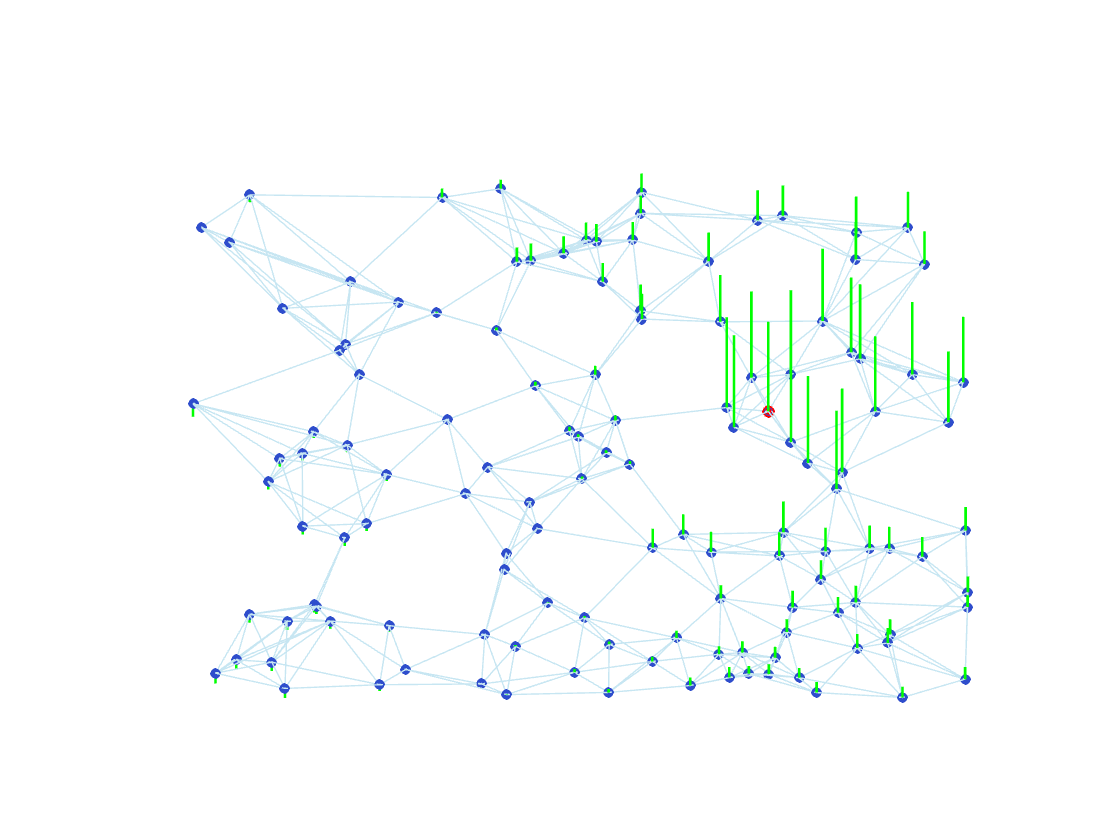}}
	\caption{Graph linear canonical wavelet on Fig. \ref{fig1}. The red dot represents the selected vertex $i=90$, while the green line represents the magnitude of the function value. (a) Wavelet kernels $H(s_j \lambda_z)$. (b) $s_1=1.693, i=90$. (c) $s_2=0.846, i=90$. (d)  $s_3=0.423, i=90$. (e)  $s_4=0.212, i=90$. }\label{fig4}
\end{figure}

\subsubsection{Graph linear canonical Stockwell trasform}
The Stockwell transform is a time-frequency analysis technique that shares similarities with both the wavelet transform and the windowed Fourier transform. The S-transform can be considered as an extension of the WT and WFT, offering a variable time-frequency resolution by using a variable-sized Gaussian window that adapts to the local characteristics of the signal.

\textbf{Graph linear canonical modulation operator.}
For any signal $f(n)$ defined on the graph $\mathcal{G}$ and any $k=0,1,\cdots,N-1$, define the graph linear canonical modulation $M_k^A$ by
\begin{align}
	M_k^Af(n)=\sum_{l=0}^{N-1} f(n)
	q_l(n) \kappa_l^{*} p_l^*(k) \epsilon_k^{*}.
\end{align}

The graph linear canonical Stockwell trasform can be defined as modulation of the GLWT \cite{stockwell1996localization}. For $s=1$, denote its corresponding GLWT as $\mathcal{W}_{\psi}^A(i,s_j)$, the GLST is defined as
\begin{align}
	\mathcal{S}_h^A(i,s_j)
	=& \sum_{l=0}^{N-1} \mathcal{W}_{h}^A(i,s_j)
	q_l(i) \kappa_l^{*} p_l^*(k) \epsilon_k^{*} \nonumber \\
	=& \sum_{l=0}^{N-1}
	\sum_{z=0}^{N-1} \sum_{r=0}^{N-1} H(s_j\lambda_z)\hat{f}_A(p) 
	q_r(n) \kappa_r^{*} p_r^*(z) \epsilon_z^{*} \nonumber \\
	& \times q_l(i) \kappa_l^{*} p_l^*(k) \epsilon_k^{*}.
\end{align}

\begin{example}
	Briefly compare the differences in kernel functions between GLWT and GLST. Utilizing the basis function provided in Example \ref{exam2}, set the linear canonical parameter uniformly to $\sigma=1$, $\xi=0$, $\beta=0.9$ and select vertex 30. The resulting figure illustrates the outcome (Fig. \ref{fig5}).
\end{example}

\begin{figure}
	\centering
	\subfloat[]{  % []内可单独为每个小图命名。默认按照(a)(b)...的顺序命名，若省去[]则小图不命名。
		\includegraphics[width=1.6 in]{figure/test3_4}}
	\subfloat[]{
		\includegraphics[width=1.6 in]{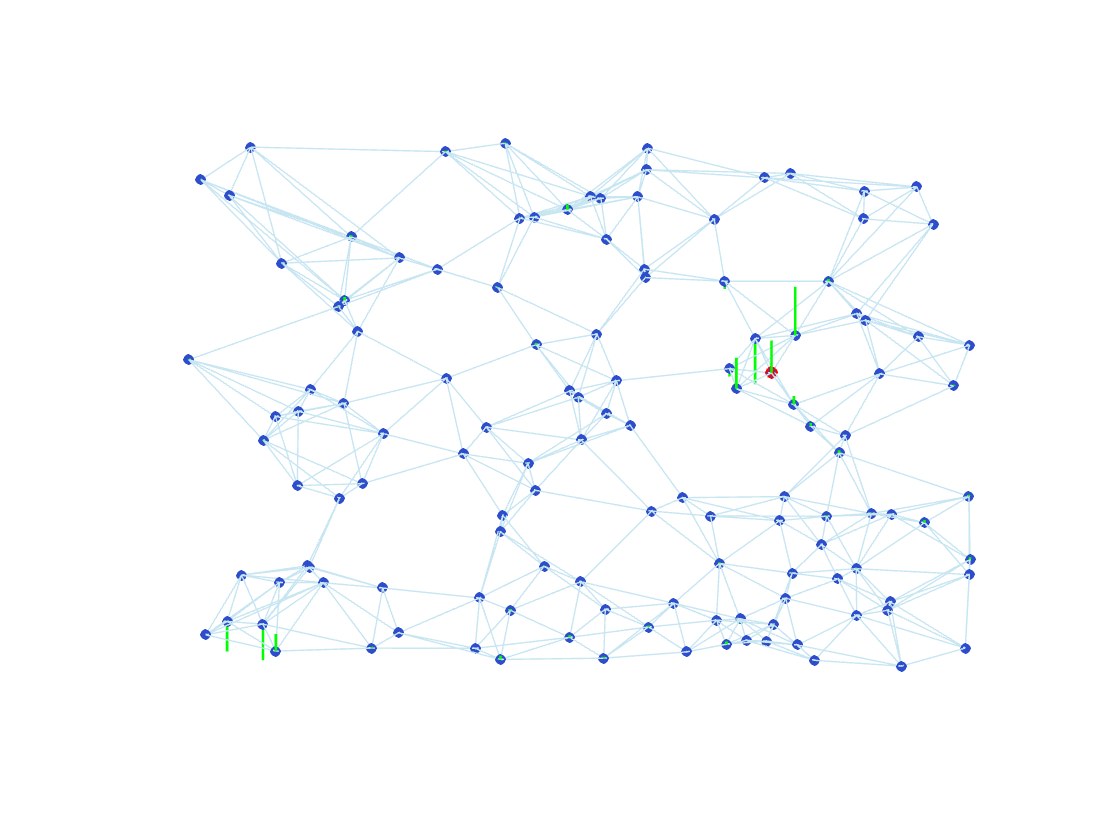}}
	\quad
	\subfloat[]{  
		\includegraphics[width=1.6 in]{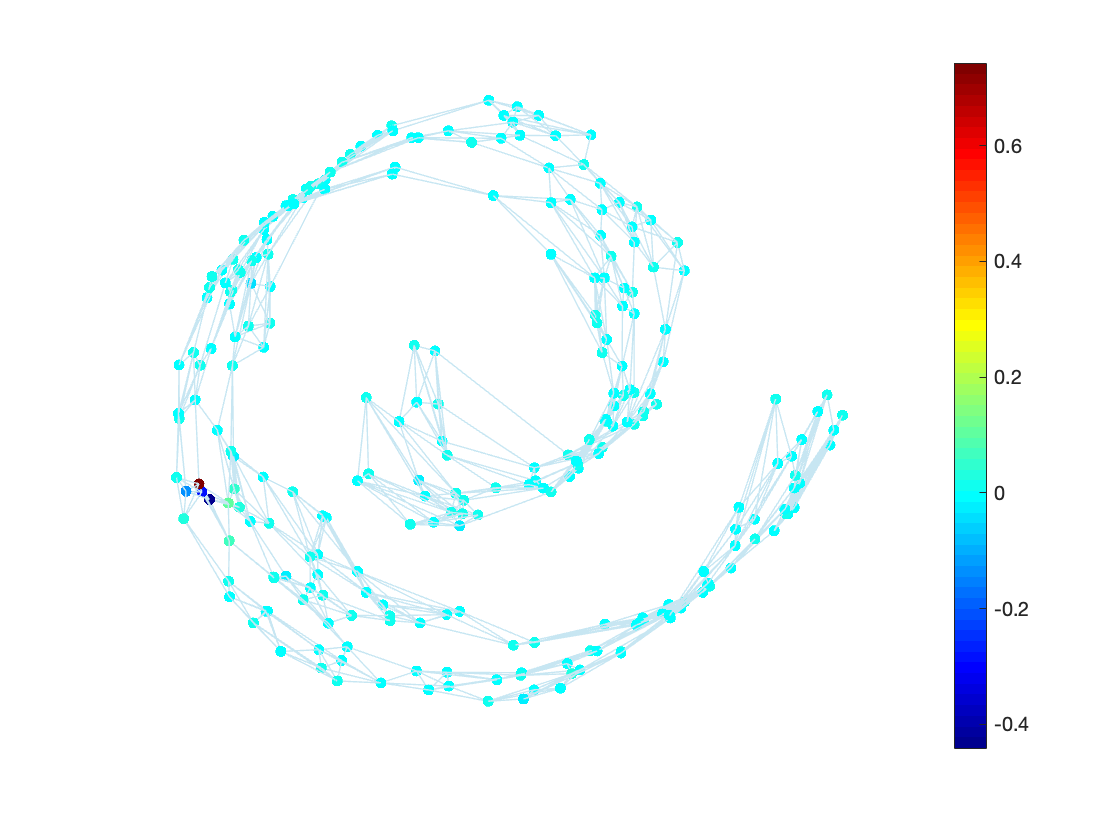}}
	\subfloat[]{
		\includegraphics[width=1.6 in]{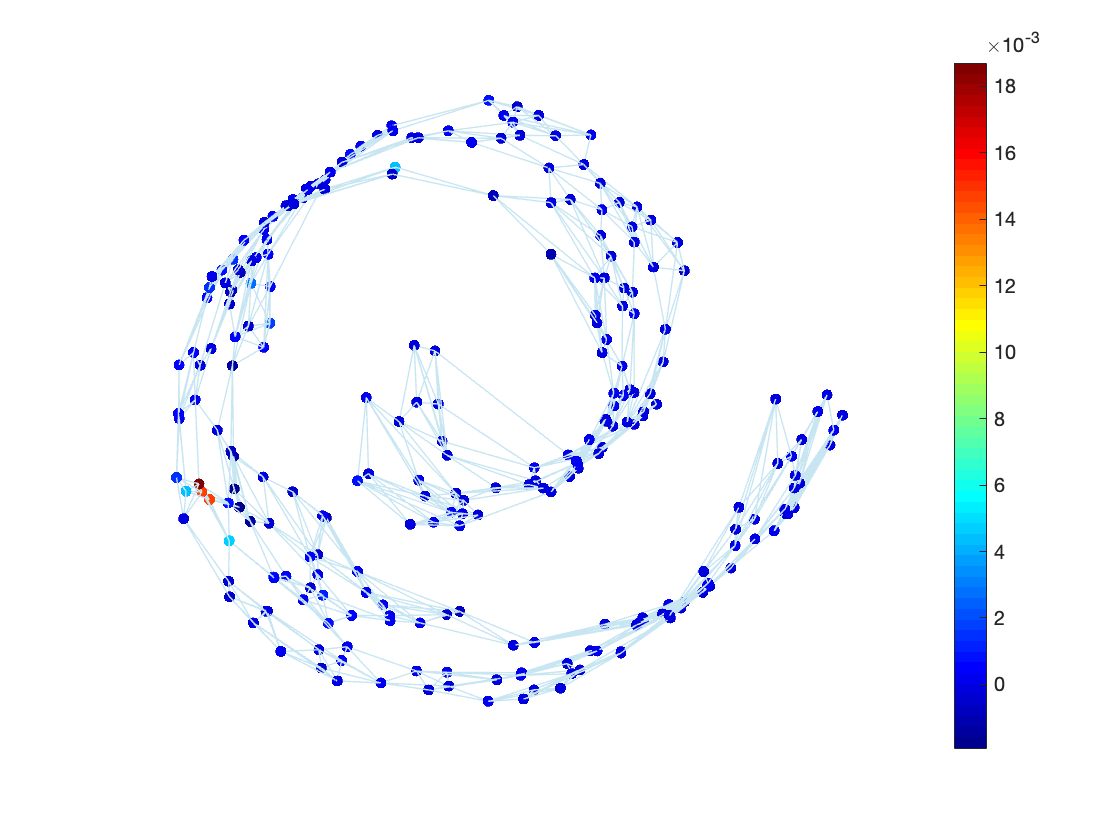}}
	\caption{(a)The GLWT kernel with $s_3=0.423, i=90$ on Fig. \ref{fig1}. (a). (b)The GLST kernel with $s_3=0.423, i=90$ on Fig. \ref{fig1}. (a). (c)The GLWT kernel with $s_3=0.423, i=90$ on Fig. \ref{fig1}. (b). (d)The GLST kernel with $s_3=0.423, i=90$ on Fig. \ref{fig1}. (b). }\label{fig5}
\end{figure}

\begin{example} \label{exam3}
	Considering a path graph with trivial edge weights, we examine the signal $f_3=\sin (60 \pi n / N)$, where $1 \leq n \leq 256$. This signal represents a sampled sinusoidal waveform with a single frequency. 
	Continuing with the utilization of the wavelet from Example \ref{exam2}, we set the parameters as $J=4$, $\sigma=0.5$, $\xi=0$, and $\beta=1.04$. As a result, we obtain four scales: $s_1=1.819$, $s_2=0.909$, $s_3=0.455$, and $s_4=0.227$.
	Figure 6 depicts the GLCT and GLST of the signal $f_3$ at scale $s_3=0.455$.
\end{example}

\begin{figure}
	\centering
	\subfloat[]{  % []内可单独为每个小图命名。默认按照(a)(b)...的顺序命名，若省去[]则小图不命名。
		\includegraphics[width=1.6 in]{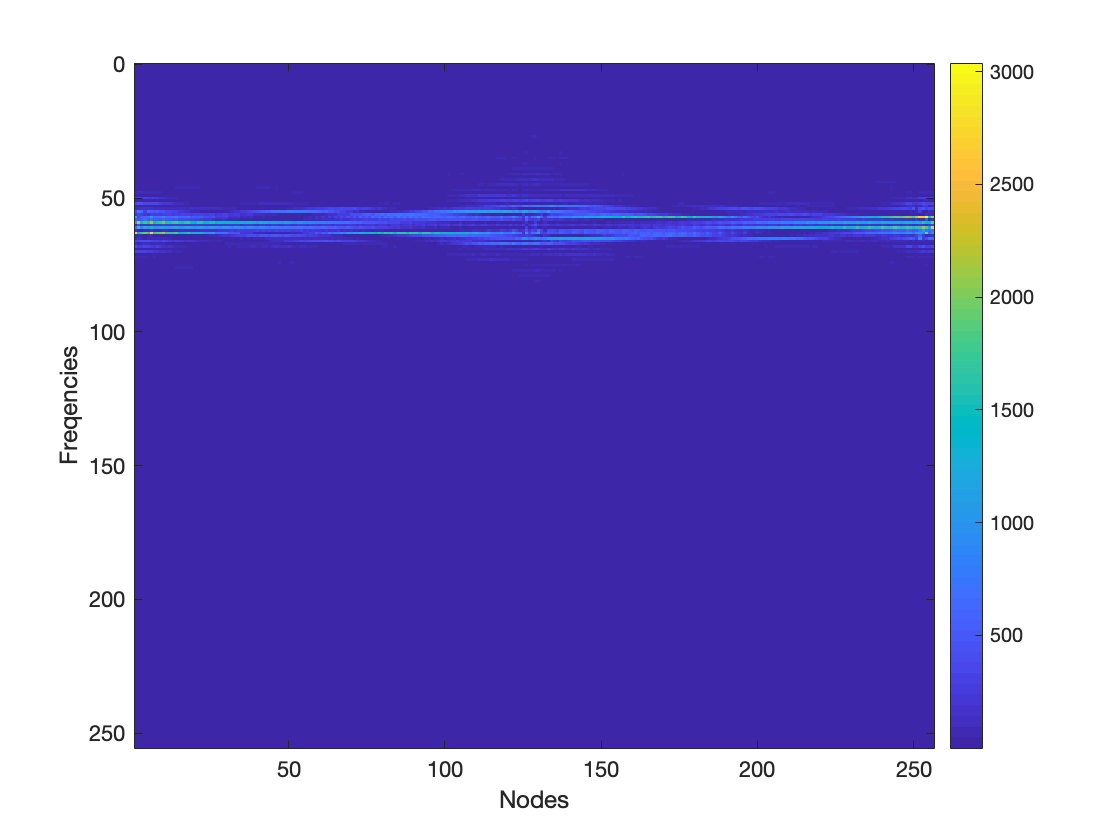}}
	\subfloat[]{
		\includegraphics[width=1.6 in]{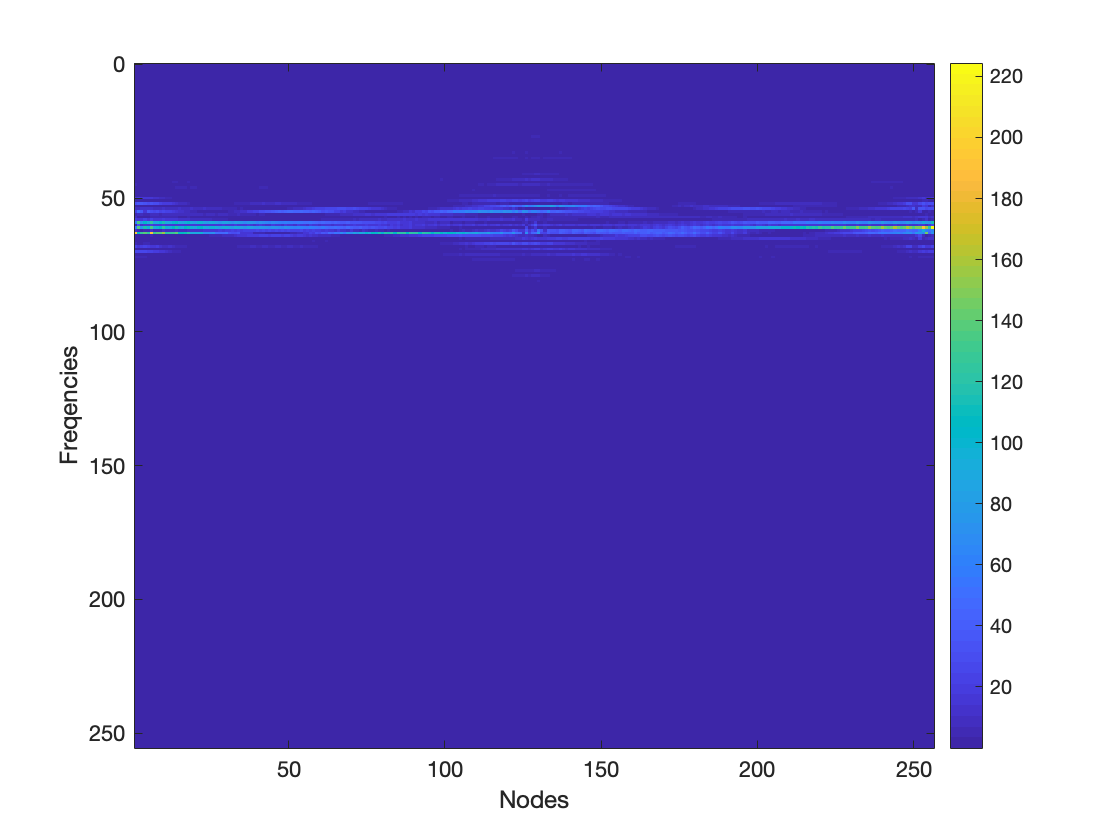}}
	\caption{(a) The GLWT of signal $f_3$. (b) The GLST of signal $f_4$. }\label{fig6}
\end{figure}

\subsubsection{Chebyshev polynomial approximation}
\label{sec3.2.3}
When employing a bandpass function in the windowing process and directly calculating it, conducting spectral analysis on the complete graph becomes imperative for deriving the basis function. To facilitate efficient computation, this section presents a polynomial approximation method grounded in the bandpass function $H_k(\lambda)$. This method enables vertex-frequency analysis using solely local signals and graph connections, thereby enhancing computational speed. 

The discrete points $\lambda=\lambda_z$ are employed for the bandpass function $H_k(\lambda)$. However, since the polynomial approximation is continuous for $0 \leq \lambda \leq \lambda_{max}$, the "min-max" Chebyshev polynomial is naturally selected as it exhibits the least deviation from the desired bandpass function. This choice ensures the accuracy of the approximation.
When the weight function is $\rho(x)=\frac{1}{\sqrt{1-x^2}}$, the orthogonal polynomial obtained by orthogonalizing the sequence $\{1,x,\cdots,x^m,\cdots \}$ is known as the Chebyshev polynomial, which can be represented as $T_m(x)=\cos (m \arccos x)$, where $|x| \leq 1$. 
The Chebyshev polynomial has the following important recursive relationships
\begin{align}
	&T_{m+1}(x)=2xT_m(x)-T_{m-1}(x), \; m=1,2,\cdots \nonumber \\
	&T_0(x)=1, \; T_1(x)=x.
\end{align}

By employing finite $(M-1)$-order Chebyshev polynomials to approximate the spectral domain localization window in (\ref{eq6}), i.e., $H_k(\mathcal{L}_{A}) =\tilde{P}_{k,M-1}(\mathcal{L}_{A}), k=0,\cdots,N-1$. The approximation has the form
\begin{align}
	\tilde{P}_{k,M-1}(\lambda)=\frac{c_{k,0}}{2}+\sum_{m=1}^{M-1}c_{k,m}\tilde{T}_m(\lambda)
\end{align}
where $\tilde{T}_m(\lambda) = \frac{2\lambda}{\lambda_{max}-1}T_m$ is utilized to map the argument from the interval $0 \leq \lambda \leq \lambda_{max}$ to the interval $[-1,1]$.
The Chebyshev coefficients are obtained by the Chebyshev polynomial inversion property, that is
\begin{align} \label{eq17}
	c_{k,m}=\frac{\pi}{2} \int_{-1}^{1} \frac{\tilde{P}_{k,M-1}(\lambda) \tilde{T}_m(\lambda)}{\sqrt{1-\lambda^2}} {\rm d}\lambda.
\end{align}
This implementation is particularly noteworthy when the graph represents large-scale data.

\begin{example} 
	Considering the path graph and utilizing the signal $f_3$ from Example \ref{exam3} along with the wavelet mentioned in Example \ref{exam2}, we proceed with the following parameters: $J=4$, $\sigma=1$, $\xi=0$, and $\beta=0.98$. By applying these parameters, we obtain six scales: $s_1=8.611$, $s_2=4.306$, $s_3=2.153$, $s_4=1.0764$, $s_5=0.538$, and $s_6=0.269$. 
	We obtained six filter kernels at different scales, as depicted in Fig. \ref{fig8}. (a). Subsequently, we approximated these wavelet kernels using a 30th order Chebyshev polynomial. The resulting approximation is presented in Fig. \ref{fig8}. (b). 
	Fig. \ref{fig8}. (c) illustrates the vertex-frequency representation results obtained using the original filter at scale $s_4=1.0764$, while Fig. \ref{fig8}. (d) displays the corresponding results obtained using the approximation filter.
\end{example}

\begin{figure}
	\centering
	\subfloat[]{  % []内可单独为每个小图命名。默认按照(a)(b)...的顺序命名，若省去[]则小图不命名。
		\includegraphics[width=1.6 in]{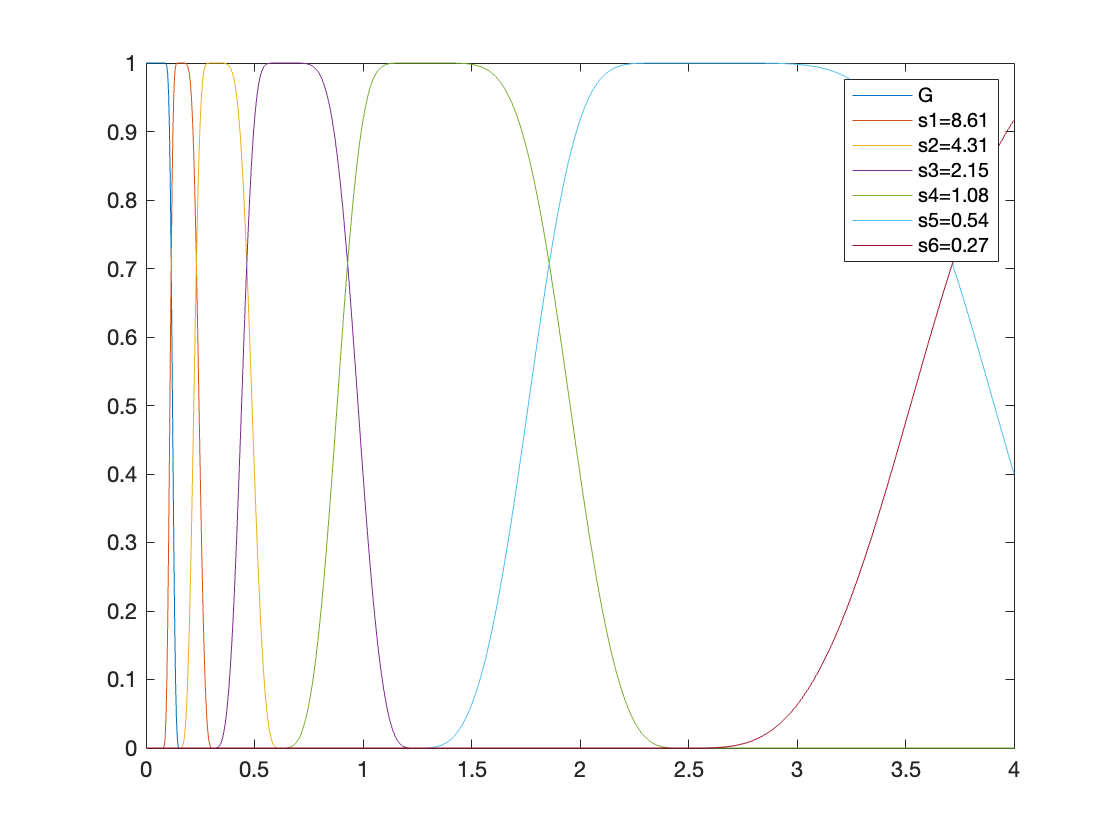}}
	\subfloat[]{
		\includegraphics[width=1.6 in]{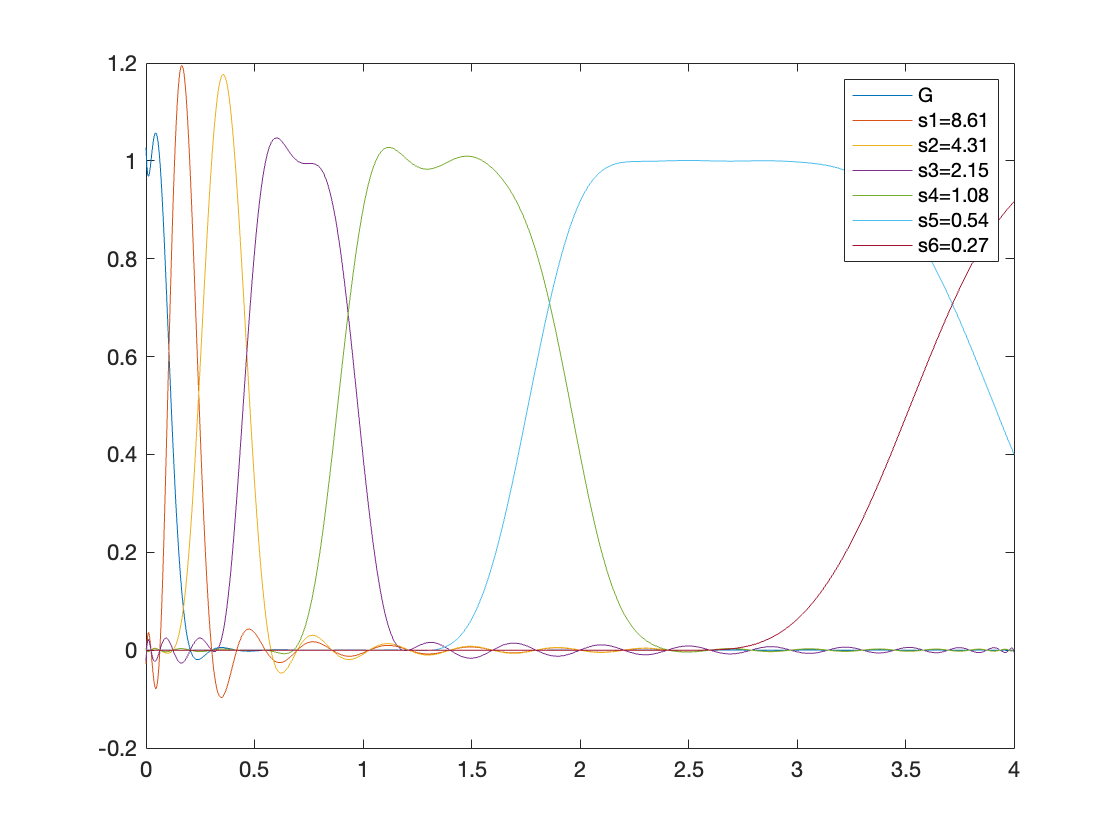}}
	\quad
	\subfloat[]{  
		\includegraphics[width=1.6 in]{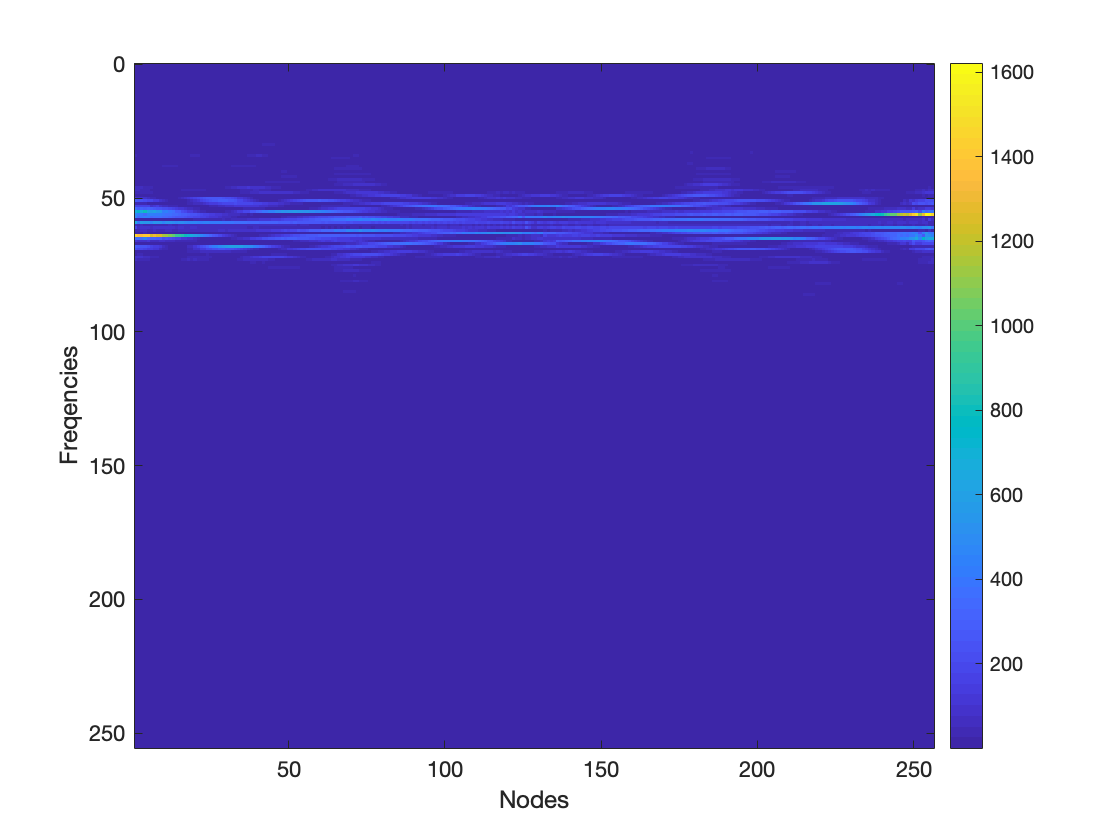}}
	\subfloat[]{
		\includegraphics[width=1.6 in]{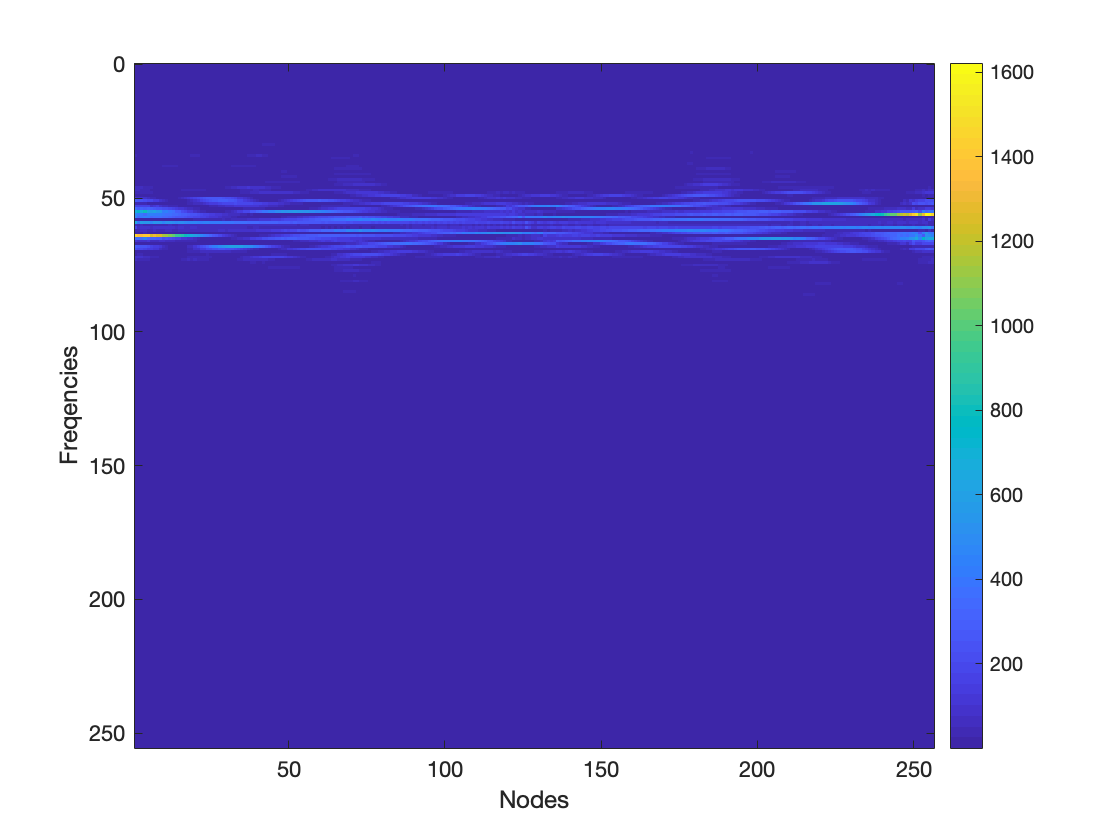}}
	\caption{(a)The GLWT kernel with $s_3=0.423, i=90$ on Fig. \ref{fig1}. (a). (b)The GLST kernel with $s_3=0.423, i=90$ on Fig. \ref{fig1}. (a). (c)The GLWT kernel with $s_3=0.423, i=90$ on Fig. \ref{fig1}. (b). (d)The GLST kernel with $s_3=0.423, i=90$ on Fig. \ref{fig1}. (b). }\label{fig8}
\end{figure}

\subsection{Localized graph linear canonical transform}
The methods mentioned above all operate in the linear canonical domain of the graph. In this section, we discuss a vertex domain localization method denoted as localized graph linear canonical transform (LGLCT).
Let $d_{n,i}$ represent the length of the shortest path from vertex $i$ to vertex $n$. We then define the localized window function as $h(d_{n,i})$, where $h(\cdot)$ represents any basic window function commonly used in classical signal processing. For instance, the Blackman window can be employed \cite{podder2014comparative}, which is defined as follows
\begin{align}
	h(d_{n,i})=0.42-0.5cos\left(\frac{2\pi 
		d_{n,i}}{\eta}\right)+0.08cos\left(\frac{4\pi d_{n,i}}{\eta}\right)
\end{align}
where $\eta$ is the assumed window width.

The distance $d_{n,i}$ can be calculated using the adjacency matrix $\mathbf{A}$. In an unweighted graph, a vertex belonging to a neighborhood of vertex $i$ is represented by a unit value element in row $i$ of the adjacency matrix $\mathbf{A}$. In the case of a weighted graph, the adjacency matrix $\mathbf{A}$ can be derived from the weighted matrix $\mathbf{W}$ by taking the sign of each element, i.e., $\mathbf{A} = {\rm sign}(\mathbf{W})$. Then we define the following matrix form
\begin{align}
	& \mathbf{A}_{d_{n,i}}
	= \begin{cases}\mathbf{A}  & d_{n,i}=1 \\
		(\mathbf{A} \odot \mathbf{A}_{d_{n,i}-1}) \circ (\mathbf{1} - \mathbf{A}_{d_{n,i}-1})\circ (\mathbf{1} - \mathbf{I}) & d_{n,i}\geq 2 \end{cases}  
\end{align}
where $\odot$ represents the logical (Boolean) matrix product, $\circ$ denotes the Hadamard (element-by-element) product, and $\mathbf{1}$ is a matrix with all elements equal to $1$. The $i$-th row of the matrix $(\mathbf{A} \odot \mathbf{A}{d_{n,i}-1})$ provides information about all vertices connected to vertex $i$ through walks of length $K = d_{n,i}$ or lower. It is important to note that the element-wise multiplication of $(\mathbf{A} \odot \mathbf{A}{d_{n,i}-1})$ by the matrix $(\mathbf{1} - \mathbf{A}{d_{n,i}-1})$ removes the vertices connected by walks of length $d_{n,i}-1$, while the multiplication by $(\mathbf{1} - \mathbf{I})$ eliminates the diagonal elements \cite{stankovic2020vertex}.

For a window function $\mathbf{h}=[h(0),\cdots,h(N-1)]^T \in \mathbb{R}^N$, the localized window function (denoted as $\mathbf{T}_{\mathbf{A}}\mathbf{h}=\left[T_{\mathbf{A}}h(n,i)\right]_{N-1\times N-1}$) with assumed graph window width $\eta$ can be given by
\begin{align}
	\mathbf{T}_{\mathbf{A}}\mathbf{h}=h(0)\mathbf{I}+h(1)\mathbf{A}_1+\cdots+h(\eta-1)\mathbf{A}_{\eta-1}.
\end{align}
The LGLCT of a graph signal $f(n)$, then becomes
\begin{align}
	\mathcal{V}_{h,3}^Af(i,k)= \sum_{n=0}^{N-1} \sum_{l=0}^{N-1} f(n) T_{\mathbf{A}}h(n,i) \epsilon_k p_l(k) \kappa_l q_l^*(n).
\end{align}

\begin{example}
	Given a path graph with trivial edge weights, we consider
	the following two time series: $f_4=\begin{cases}\sin (60 \pi n / N), & 1 \leq n \leq 128 \\ \sin (90 \pi n / N), & 129<n \leq 256\end{cases}$ is a sampled sinusoidal signal
	with two different frequencies, $f_5(n)=\sin \left(\left(20 n+0.5 n^2\right) \pi / N\right), 1 \leq n \leq 256$ is a chirp signal. 
	The LGLCT of signals $f_4$ and $f_5$ is depicted in Fig. \ref{fig7}.
\end{example}

\begin{figure}
	\centering
	\subfloat[]{  % []内可单独为每个小图命名。默认按照(a)(b)...的顺序命名，若省去[]则小图不命名。
		\includegraphics[width=1.6 in]{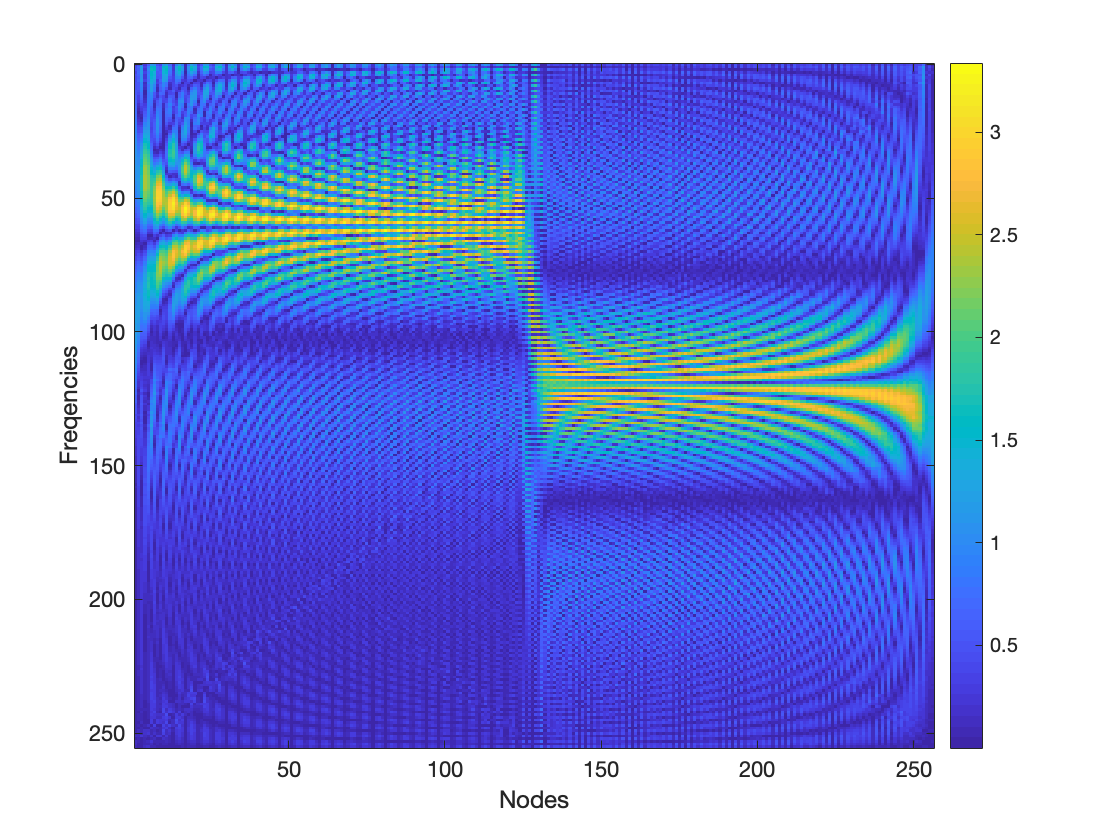}}
	\subfloat[]{
		\includegraphics[width=1.6 in]{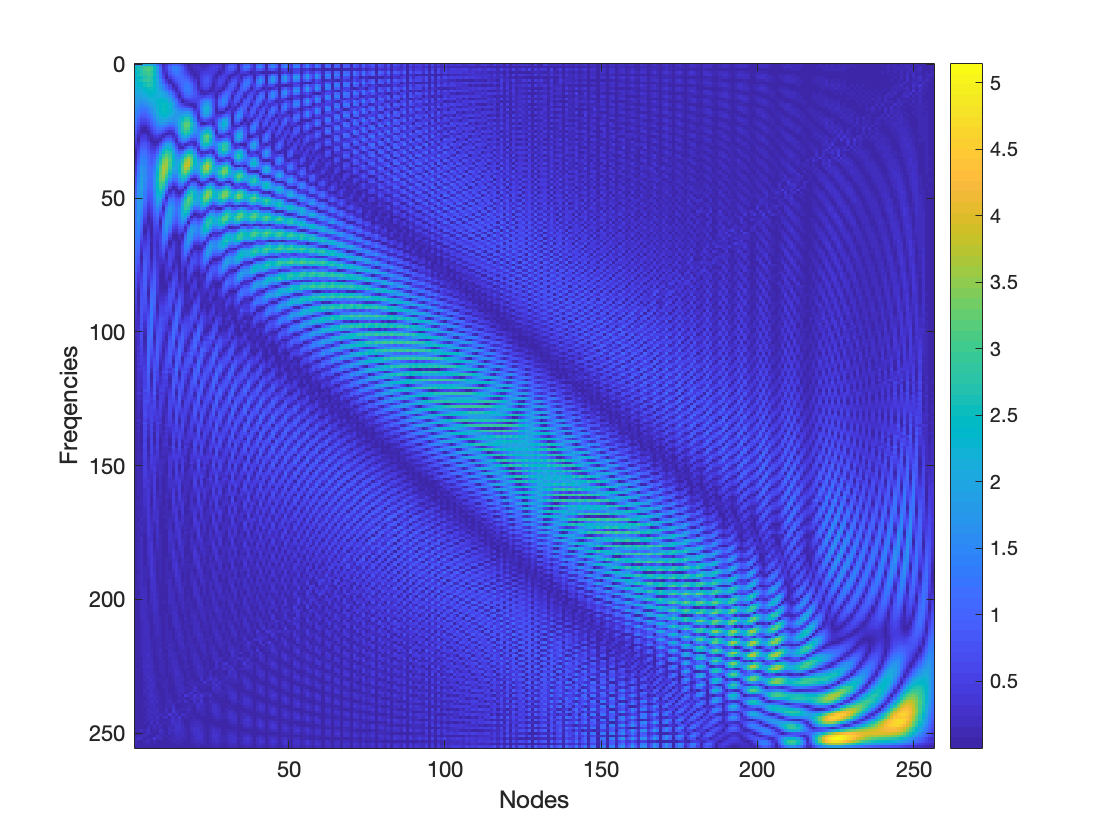}}
	\caption{The LGLCT of graph signals. (a) The LGLCT of signal $f_4$. (b) The LGLCT of signal $f_5$. }\label{fig7}
\end{figure}

\section{Filter Design} \label{sec5}
Within a basic veryex-frequency analysis framework, different filter selections can correspond to various transformations. The selection of an appropriate filter is crucial in practical applications. For instance, different spectral graph neural networks mainly rely on distinct filter learning approaches \cite{chen2020understanding, bo2023survey, wang2022powerful}. Therefore, in this section, we will initially explore optimization learning methods for filters and subsequently apply them to image classification tasks.

\subsection{Optimal Filter}
In Section \ref{sec3.1}, the graph linear canonical convolution operator was presented as $\mathbf{f} *_A \mathbf{h}= \mathcal{F}_{A}^H \left( \mathcal{F}_{A}\mathbf{f} \odot \mathcal{F}_{A}\mathbf{h} \right)$, as shown in (\ref{eq9}), where $\odot$ reprents a point-wise product.
In general, designing filters on a graph involves designing a diagonal matrix $\mathbf{H}(\boldsymbol{\Lambda}_A)={\rm diag}(\mathbf{\hat{h}})$, where $\mathbf{\hat{h}}=[h_1,h_2,\cdots,h_N]^{\top}$. Then, the graph linear canonical convolution operator can be rewritten as
\begin{align}
	\mathbf{f} *_A \mathbf{h}= \mathcal{F}_{A}^H \mathbf{H}(\boldsymbol{\Lambda}_A) \mathcal{F}_{A}\mathbf{f} =\mathbf{H}(\mathcal{L}_{A}) \mathbf{f}.
\end{align}
It follows that the signal $\mathbf{f}$ is filtered by $\mathbf{H}(\mathcal{L}_{A})$.

\begin{remark}
	Extending convolution to inputs $\mathbf{f}$ with multiple input channels is a straightforward process. If $\mathbf{f}$ is a signal with $M$ input channels and $N$ locations, we apply the GLCT $\mathcal{F}_{A}$ to each channel independently. Then, to perform the convolution, we utilize multipliers $\mathbf{\hat{h}} = (h_{i,j} ; i \leq N , j \leq M)$.
\end{remark}

Consider an input signal $\mathbf{f}$, where the actual output signal is represented as
\begin{align}
	\mathbf{g}=\mathbf{Gf}+\mathbf{n},
\end{align}
where $\mathbf{G}$ is a known matrix, $\mathbf{n}$ is the additive noise term, and $\mathbf{f}$ is a stochastic graph signal \cite{ozturk2021optimal}.

Our next objective is to learn an optimized filter that minimizes the loss function while predicting the output signal $\widetilde{\mathbf{g}}$. We define the squared loss function as
$R(\mathbf{H}) = \frac{1}{N} \|\mathbf{H}(\mathcal{L}_{A}) \mathbf{f} - \mathbf{g}\|_F^2$.
Therefore, our objective is to design the matrix $\mathbf{H}(\mathcal{L}_{A})$ to solve the following optimization problem
\begin{align} \label{eq10}
	\min_{\mathbf{H}}
	\quad
	\frac{1}{N} \|\mathbf{H}(\mathcal{L}_{A}) \mathbf{f} - \mathbf{g}\|_F^2
\end{align}
for any $A$. Subsequently, we will explore different possible values of $A$ to identify $A^*$ that minimizes the loss.

\begin{thm}
The problem defined in (\ref{eq10}) will yield identical loss values for any $A$ if there are no constraints on $\mathbf{H}(\mathcal{L}_{A})$.
\end{thm}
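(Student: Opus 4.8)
The plan is to show that the optimization problem in (\ref{eq10}) is, for every fixed $A$, an unconstrained least-squares problem whose optimal loss depends only on how well the span of attainable filtered signals $\{\mathbf{H}(\mathcal{L}_A)\mathbf{f} : \mathbf{H}\}$ can approximate the target $\mathbf{g}$, and that this attainable span is in fact independent of $A$. The key observation is that $\mathbf{H}(\mathcal{L}_A) = \mathcal{F}_A^H \mathbf{H}(\boldsymbol{\Lambda}_A)\mathcal{F}_A$, where $\mathbf{H}(\boldsymbol{\Lambda}_A) = \mathrm{diag}(\hat{\mathbf{h}})$ is an arbitrary diagonal matrix because there are no constraints on the filter coefficients $\hat{\mathbf{h}} = [h_1,\dots,h_N]^\top$.

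First I would write the filtered signal explicitly: with $\tilde{\mathbf{f}} = \mathcal{F}_A\mathbf{f}$ denoting the GLCT of the input, the product $\mathbf{H}(\boldsymbol{\Lambda}_A)\mathcal{F}_A\mathbf{f}$ has entries $h_j \tilde{f}(j)$, so that $\mathbf{H}(\mathcal{L}_A)\mathbf{f} = \mathcal{F}_A^H \big(\hat{\mathbf{h}} \odot \tilde{\mathbf{f}}\big)$. I would then argue that as $\hat{\mathbf{h}}$ ranges over all of $\mathbb{C}^N$, the vector $\hat{\mathbf{h}} \odot \tilde{\mathbf{f}}$ ranges over every vector supported on the indices where $\tilde{f}(j) \neq 0$, and applying the invertible transform $\mathcal{F}_A^H$ maps this onto a fixed subspace of $\mathbb{R}^N$ (or $\mathbb{C}^N$). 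The crucial point is that $\mathcal{F}_A$ is invertible, so $\tilde{f}(j)=0$ exactly when the corresponding coordinate of $\mathbf{f}$ in the GLCT basis vanishes; generically $\tilde{f}(j)\neq 0$ for all $j$, in which case $\{\mathbf{H}(\mathcal{L}_A)\mathbf{f}\}$ equals the whole space and the minimal loss is $0$ for every $A$.

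Next I would handle the achievable-set comparison carefully. Because $\mathcal{F}_A^H$ is a bijection and $\hat{\mathbf{h}}\odot\tilde{\mathbf{f}}$ can realize any vector whose support lies within $\mathrm{supp}(\tilde{\mathbf{f}})$, the reachable set is $\mathcal{F}_A^H \cdot \mathrm{span}\{\mathbf{e}_j : \tilde f(j)\neq 0\}$. The optimal loss is the squared distance from $\mathbf{g}$ to this set (an orthogonal-projection residual), and I must show this residual is the same for all $A$. When $\mathbf{f}$ has a full-support GLCT the reachable set is all of $\mathbb{R}^N$, giving zero residual independent of $A$ and proving the claim immediately. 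I would present this full-support case as the main argument, since it captures the intended meaning: without constraints on $\mathbf{H}$, the filter has enough degrees of freedom to map $\mathbf{f}$ to any output, so the choice of transform domain $A$ is irrelevant.

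The main obstacle I anticipate is the degenerate case where some GLCT coefficients $\tilde f(j)$ vanish, since then the reachable subspace is proper and one must check its image under $\mathcal{F}_A^H$ does not actually depend on $A$ in a way that changes the projection residual. I would address this by noting that the statement is really about the unconstrained expressive power of the filter bank, and either restrict to signals with $\tilde f(j)\neq 0$ for all $j$ (the generic situation), or remark that $\mathbf{H}(\mathcal{L}_A)\mathbf{f}$ can be made to equal $\mathbf{g}$ precisely by choosing $h_j = \hat{g}_A(j)/\tilde f(j)$ on the support and arbitrarily elsewhere, so a perfect fit with loss $0$ is attainable regardless of $A$. This reduces the whole argument to the invertibility of $\mathcal{F}_A$ and the freedom in choosing the diagonal $\mathbf{H}(\boldsymbol{\Lambda}_A)$, making the equality of loss values across all $A$ transparent.
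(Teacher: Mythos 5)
Your reading of the hypothesis is where the argument goes astray. You take ``no constraints on $\mathbf{H}(\mathcal{L}_A)$'' to mean that the spectral coefficients $\hat{\mathbf{h}}$ are free while the filter is still forced to have the diagonal form $\mathbf{H}(\mathcal{L}_A)=\mathcal{F}_A^H\,\mathrm{diag}(\hat{\mathbf{h}})\,\mathcal{F}_A$. But diagonality is itself a constraint on $\mathbf{H}(\mathcal{L}_A)$: it forces the vertex-domain operator to commute with $\mathcal{L}_A$. The theorem, and the paper's proof, let $\mathbf{H}(\mathcal{L}_A)$ be an arbitrary $N\times N$ matrix, and the proof is then a two-sided transfer argument that needs no genericity and never claims the minimum is zero: if $\mathbf{H}(\boldsymbol{\Lambda}_{A_2})$ is optimal for $A_2$, the choice $\mathbf{H}(\boldsymbol{\Lambda})=\mathcal{F}_{A_1}\mathbf{H}(\mathcal{L}_{A_2})\mathcal{F}_{A_1}^H$ is feasible for the $A_1$ problem in (\ref{eq11}) and, using $\mathcal{F}_{A_1}^H\mathcal{F}_{A_1}=\mathbf{I}$ (implicit in the paper's definition of the IGLCT), it reproduces exactly the vertex-domain operator $\mathbf{H}(\mathcal{L}_{A_2})$ and hence exactly the optimal $A_2$ loss; therefore the $A_1$ minimum is at most the $A_2$ minimum, and by symmetry they are equal.

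This is not merely a different route: under your diagonal interpretation the statement you are proving is false in the paper's actual setting, so no repair of the degenerate case can succeed. The loss in (\ref{eq10}) is a Frobenius norm and, per the remark on multichannel inputs and Example \ref{exam7}, $\mathbf{f}$ is an $N\times M$ matrix of signals. With a diagonal filter the problem decouples into per-row scalar least squares, $\min_{h_j}\|h_j(\mathcal{F}_A\mathbf{f})_{j,:}-(\mathcal{F}_A\mathbf{g})_{j,:}\|_2^2$, whose residuals are generically positive and genuinely depend on $A$, since changing $A$ changes how the rows of $\mathcal{F}_A\mathbf{f}$ and $\mathcal{F}_A\mathbf{g}$ align; this $A$-dependence under constrained filters is precisely the point of Theorem \ref{thm2} and Table \ref{tab1}, and it is why the paper states this theorem at all (to show that only constrained filters make the choice of $A$ meaningful). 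Your ``reachable set is everything, loss $=0$'' argument covers only a single-channel signal with full-support GLCT spectrum; the degenerate case you flagged (some $\tilde f(j)=0$) already breaks the claim there, and restricting to generic signals proves a different statement rather than the theorem. The missing idea is that ``unconstrained'' means unconstrained as a vertex-domain operator, which makes the feasible set $\{\mathbf{H}(\mathcal{L}_A)\}$ the set of all matrices for every $A$ and the equality of minima immediate.
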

\begin{proof}
	Consider any parameters $A_1$ and $A_2$ such that $A_1 \neq A_2$, and define
	\begin{align} \label{eq11}
		\mathbf{H}(\boldsymbol{\Lambda}_{A_j})=\arg \min _{\mathbf{H}} \left\{\left\|\mathbf{H}(\mathcal{L}_{A_j}) \mathbf{f} - \mathbf{g}\right\|_F^2\right\} 
	\end{align}
	where $j=1,2$. 
	We note that for any choice of $\mathbf{H}(\boldsymbol{\Lambda}_{A_j})$, according to $j=1$ in (\ref{eq11}), we have
	\begin{align}
		\left\|\mathcal{F}_{A_1}^H \mathbf{H}(\boldsymbol{\Lambda}_{A_1}) \mathcal{F}_{A_1} \mathbf{f} - \mathbf{g}\right\|_F^2
		\leq
		\left\|\mathcal{F}_{A_1}^H \mathbf{H}(\boldsymbol{\Lambda}) \mathcal{F}_{A_1} \mathbf{f} - \mathbf{g}\right\|_F^2. \nonumber
	\end{align}
	Then, by choosing $\mathbf{H}(\boldsymbol{\Lambda})= \mathcal{F}_{A_1} \mathbf{H}(\mathcal{L}_{A_2}) \mathcal{F}_{A_1}^H$, we can obtain from $j=2$ in (\ref{eq11}) that
	\begin{align}
		\left\|\mathcal{F}_{A_1}^H \mathbf{H}(\boldsymbol{\Lambda}_{A_1}) \mathcal{F}_{A_1} \mathbf{f} - \mathbf{g}\right\|_F^2
		\leq
			\left\|\mathcal{F}_{A_2}^H \mathbf{H}(\boldsymbol{\Lambda}_{A_2}) \mathcal{F}_{A_2} \mathbf{f} - \mathbf{g}\right\|_F^2. \nonumber
	\end{align}
	Similarly, it can be easily shown that
	\begin{align}
		\left\|\mathcal{F}_{A_2}^H \mathbf{H}(\boldsymbol{\Lambda}_{A_2}) \mathcal{F}_{A_2} \mathbf{f} - \mathbf{g}\right\|_F^2
		\leq
		\left\|\mathcal{F}_{A_1}^H \mathbf{H}(\boldsymbol{\Lambda}_{A_1}) \mathcal{F}_{A_1} \mathbf{f} - \mathbf{g}\right\|_F^2. \nonumber
	\end{align}
Therefore, we can conclude that the same minimum value is achieved for any $A_1$ and $A_2$.
\end{proof}

By deriving the objective function in (\ref{eq11}), we can identify the optimal solution. The subsequent theorem delineates the solution to the optimal filtering problem stated in (\ref{eq11}).

\begin{thm} \label{thm2}
	For the problem in (\ref{eq11}), the optimal filter coefficients $\mathbf{\hat{h}}^{\text{opt}}=[h_1^{\text{opt}},\cdots, h_N^{\text{opt}}]^{\top}$ are obtained from the following matrix equation
	\begin{align}
		(\mathcal{F}_{A} \mathbf{f} \odot \mathcal{F}_{A} \mathbf{f}) \mathbf{1}_N \odot  \mathbf{\hat{h}}^{\text{opt}} = (\mathcal{F}_{A} \mathbf{f} \odot  \mathcal{F}_{A} \mathbf{g}) \mathbf{1}_N.
	\end{align}
\end{thm}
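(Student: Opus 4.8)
The plan is to push the problem into the graph linear canonical domain, where the diagonal structure of $\mathbf{H}(\boldsymbol{\Lambda}_A)$ makes the objective separable index-by-index, and then solve $N$ independent scalar least-squares problems. First I would substitute $\mathbf{H}(\mathcal{L}_A)=\mathcal{F}_A^H\mathbf{H}(\boldsymbol{\Lambda}_A)\mathcal{F}_A$ with $\mathbf{H}(\boldsymbol{\Lambda}_A)=\mathrm{diag}(\hat{\mathbf{h}})$, so that the prediction reads $\mathbf{H}(\mathcal{L}_A)\mathbf{f}=\mathcal{F}_A^H\big(\hat{\mathbf{h}}\odot(\mathcal{F}_A\mathbf{f})\big)$. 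Using that $\mathcal{F}_A$ is unitary — precisely the property underlying the IGLCT relation $\mathcal{F}_A^{-1}=\mathcal{F}_A^H$ — the Frobenius norm in (\ref{eq11}) is invariant under $\mathcal{F}_A$, and the loss rewrites as $\big\|\hat{\mathbf{h}}\odot(\mathcal{F}_A\mathbf{f})-\mathcal{F}_A\mathbf{g}\big\|_F^2$.

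Next I would note that since $\mathbf{H}(\boldsymbol{\Lambda}_A)$ only scales the $k$-th spectral coefficient by $h_k$, the transformed loss splits into a sum of $N$ terms, one per spectral index $k$, each depending on a single unknown $h_k$. Writing $\widehat{f}_{k}=(\mathcal{F}_A\mathbf{f})_k$ and $\widehat{g}_{k}=(\mathcal{F}_A\mathbf{g})_k$ (and summing over the multiple channels/realizations that the Frobenius norm and the reduction $\mathbf{1}_N$ collect), the $k$-th subproblem is the convex quadratic $\min_{h_k}|h_k\widehat{f}_{k}-\widehat{g}_{k}|^2$. Because the GLCT is complex-valued, I would differentiate with the Wirtinger operator $\partial/\partial\overline{h_k}$ and set it to zero, obtaining the scalar normal equation $\overline{\widehat{f}_{k}}\,\widehat{f}_{k}\,h_k=\overline{\widehat{f}_{k}}\,\widehat{g}_{k}$. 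Collecting these $N$ stationarity conditions into a single vector identity, with the Hadamard product encoding the entrywise multiplications and $\mathbf{1}_N$ the summation over realizations, reproduces exactly the matrix equation in the statement (understanding that the first factor of each Hadamard product carries a conjugate).

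The step I expect to be the main obstacle is justifying the norm-preserving passage to the spectral domain, i.e.\ that $\mathcal{F}_A\mathcal{F}_A^H=\mathbf{I}$. A direct computation gives $\mathcal{F}_A\mathcal{F}_A^H=\Upsilon\Upsilon^H=\mathrm{diag}\big(|\mathbf{U}\hat{s}_{\xi}|^{2}\big)$, since $\mathbf{Q}$ and $\mathbf{P}$ are unitary and $\mathcal{K}=R^{\beta}$ has unit-modulus diagonal; this equals $\mathbf{I}$ only when the chirp-modulation vector $\mathbf{U}\hat{s}_{\xi}$ has unit-modulus entries. If this holds, the frequencies decouple cleanly and the stated equation follows; if not, an extra factor $|(\mathbf{U}\hat{s}_{\xi})_k|^2$ multiplies $h_k$, and the theorem must either absorb it or be read under an explicit unitarity hypothesis. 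A secondary, purely bookkeeping difficulty is keeping the complex conjugates consistent through the Wirtinger calculus, which is also where the convexity of each scalar subproblem guarantees that the unique stationary point is the global minimizer.
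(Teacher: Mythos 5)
Your proposal follows essentially the same route as the paper's proof: rewrite the loss in the spectral domain using $\mathcal{F}_A\mathcal{F}_A^H=\mathbf{I}$ (the paper's (\ref{eq16})), set the derivative with respect to $\hat{\mathbf{h}}$ to zero, and collect the per-index normal equations into the stated Hadamard-product identity; your decoupling into $N$ scalar Wirtinger subproblems is only a presentational variant of the paper's single vector gradient. The two caveats you flag are genuine but are glossed over by the paper rather than by you: the paper's gradient formula omits the complex conjugate on $\mathcal{F}_A\mathbf{f}$ (so its equation is exact only for real spectral data), and the unitarity needed to pass to the spectral domain indeed reduces to $\Upsilon\Upsilon^H=\mathbf{I}$, i.e.\ unit-modulus entries of $\mathbf{U}\hat{s}_{\xi}$, which the paper assumes implicitly through its definition of the IGLCT as $\mathcal{F}_A^H$.
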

\begin{proof}
	Consider a fixed parameter $A$, the squared loss function is expressed as
	$R = \frac{1}{N} \|\mathcal{F}_{A}^H \mathbf{H}(\boldsymbol{\Lambda}) \mathcal{F}_{A} \mathbf{f} - \mathbf{g}\|_F^2$.
	We rewrite it in the spectral domain as
	\begin{align} \label{eq16}
		\hat{R} = \frac{1}{N} \|\mathbf{H}(\boldsymbol{\Lambda}) \mathcal{F}_{A} \mathbf{f} -  \mathcal{F}_{A}\mathbf{g}\|_F^2.
	\end{align}
	Taking the derivative of $\hat{R}$ with respect to $\mathbf{\hat{h}}$, we obtain
	\begin{align}
		\nabla_{\mathbf{\hat{h}}} \hat{R} = \frac{2}{N} \left(\mathcal{F}_{A} \mathbf{f} \odot (\mathbf{\hat{h}} \odot \mathcal{F}_{A} \mathbf{f} - \mathcal{F}_{A} \mathbf{g} ) \right) \mathbf{1}_N.
	\end{align}
	Let $\nabla_{\mathbf{\hat{h}}} \hat{R} = 0$ and $\mathbf{\hat{h}}^{\text{opt}}=[h_1^{\text{opt}},\cdots, h_N^{\text{opt}}]^{\top}$, we obtain
	\begin{align}
		(\mathcal{F}_{A} \mathbf{f} \odot \mathcal{F}_{A} \mathbf{f}) \mathbf{1}_N \odot  \mathbf{\hat{h}}^{\text{opt}} = (\mathcal{F}_{A}\mathbf{f} \odot  \mathcal{F}_{A} \mathbf{g}) \mathbf{1}_N.
	\end{align}
\end{proof}

\begin{example} \label{exam7}
	Consider the graph structure depicted in Example \ref{exam6}, where the spectral content of the graph signal length on each vertex follows a normal distribution, i.e., $\mathbf{\hat{f}}_6(k) \sim \mathcal{N}(e^{-1.5*\lambda_k}, (e^{-1.5*\lambda_k}/2)^2)$. Thus, the graph signal $\mathbf{f}_6$ is a matrix of size 100 × 200, where $\mathbf{f}_6(n)=\mathcal{F}_{A}^H\mathbf{\hat{f}}_6(k)$ denotes the signal on the nth vertex, a vector with a length of 200.
	Let the parameters of GLCT be set to $\sigma=1.5$, $\xi=0.5$, $\beta=0.95$. The ideal filter is defined as $h_k=e^{-2 \lambda_k}$, where $k=0,\cdots,N-1$, and the noise term $\mathbf{n}$ follows a normal distribution $\mathbf{n} \sim \mathcal{N}(0, 0.1^2)$. Consequently, the actual output signal is given by $\mathbf{g}=\mathcal{F}_{A}^H \mathbf{H(\Lambda)} \mathcal{F}_{A} \mathbf{f}_6+\mathbf{n}$.
	According to Theorem \ref{thm2}, the optimal filter design result can be derived. Figure \ref{fig10} illustrates the obtained optimal filter result depicted by the orange curve, while the blue curve represents the ideal filter.
\end{example}

\begin{figure}
	\centering
	\includegraphics[width=0.5\textwidth]{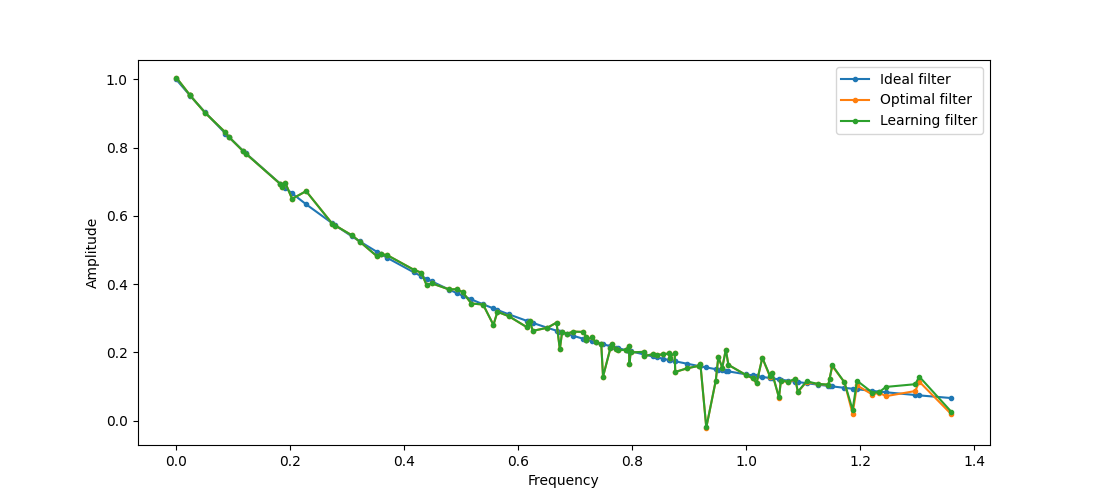}
	\caption{Ideal Filter, optimal filter and learning filter. }\label{fig10}
\end{figure}

\subsection{Learning Filter}
In the aforementioned optimal filter design method, the gradient of the loss function constitutes a non-convex function, rendering it challenging to guarantee the discovery of the global optimal solution.
To accommodate diverse datasets and problem scenarios more flexibly, optimization algorithms like gradient descent can be employed for filter learning. This is particularly pertinent when dealing with more intricate models or elevated performance criteria. 
Furthermore, alongside the fundamental gradient descent method, numerous enhanced optimization algorithms exist, including stochastic gradient descent (SGD), the momentum method, Adam, Adagrad, and others. These algorithms offer various advantages tailored to different problem domains, aiding in expediting convergence and enhancing algorithmic performance.

Here, we introduce a method for learning filters via SGD, which serves as the groundwork for future advancements in spectrogram neural networks, as shown in Algorithm \ref{algo2}. 

\begin{algorithm}[!h]
	\label{algo2}
	\caption{SGD-based filter learning algorithm}%算法标题
	\begin{algorithmic}[1]%一行一个标行号
		\REQUIRE ~~\\
		Input graph signal: $\mathbf{f}$,  Target signal: $\mathbf{g}$.\\
		Initial value of filter: $\mathbf{H}^0$. \\
		The parameters of GLCT: $A=(\xi, \sigma, \beta)$. \\
		Learning rate: $\epsilon$, Size of the mini-batch: $N_{\text{mini}}$, Stopping criterion, Maximum number of iterations.\\
		\ENSURE ~~\\
		The learned filter: $\mathbf{H}^{\text{sgd}}={\rm diag}(\mathbf{\hat{h}}^{\text{sgd}})$.\\
		Loss function value: $\hat{R}(\mathbf{H}^{\text{sgd}})$.
		
		\STATE Compute $\mathcal{F}_{A} =  \Upsilon \mathbf{P} \mathcal{K} \mathbf{Q}^H$.
		\STATE Compute the loss function in (\ref{eq16}) of the initial filter: $\hat{R}(\mathbf{H}^0)$.
		\STATE Let $\mathbf{H}^{\text{sgd}}=\mathbf{H}^0$.
		\STATE \textbf{while} the stopping criteria are not met \textbf{do} :\\
		       Retrieve a mini-batch of samples with a length of $N_{\text{mini}}$ from the training set. \\
		        Calculate the gradient of the loss function:     
		        $\nabla_{\mathbf{\hat{h}}^{\text{sgd}}} \hat{R} = \frac{2}{N} \left(\mathcal{F}_{A} \mathbf{f} \odot (\mathbf{\hat{h}}^{\text{sgd}} \odot \mathcal{F}_{A} \mathbf{f} - \mathcal{F}_{A} \mathbf{g} ) \right) \mathbf{1}_N$. \\
		        Filter update: $\hat{\mathbf{h}}^{\text{sgd}}=\hat{\mathbf{h}}^{\text{sgd}}-\epsilon \nabla_{\mathbf{\hat{h}}^{\text{sgd}}} \hat{R} $. \\
		         \textbf{end while}
		 \STATE Compute the loss function in (\ref{eq16}) of the learned filter: $\hat{R}(\mathbf{H}^{\text{sgd}})$.
	\end{algorithmic}
\end{algorithm}

\begin{example}
	Continuing with the consideration of the graph signal $\mathbf{f}$ and target signal $\mathbf{g}$ in Example \ref{exam7}, we set the initial filter to a random sequence following a standard distribution, i.e., $\hat{\mathbf{h}}^0 \sim \mathcal{N}(0, 1)$.
	The parameters of GLCT are set to $\sigma=1.5$, $\xi=0.5$, and $\beta=0.95$, with a learning rate of $\epsilon=0.6$, a mini-batch size of $N_{\text{mini}}=100$, a stopping criterion of $0.001$, and a maximum number of iterations of $100$. 
	The final learned filter is depicted in Figure 10, where the green curve represents the result.
\end{example}

\subsection{Image Classification based on designed filter}
In addition to the direct utilization of advanced filters \cite{bruna2013spectral, liao2019lanczosnet}, many researchers also employ polynomial form filters, which are extended in polynomial form (e.g., Chebyshev polynomial expansion as discussed in Section \ref{sec3.2.3}). Polynomial filters offer the advantages of achieving locality and reducing learning complexity. Several notable examples include CheyNet \cite{defferrard2016convolutional}, GPRGNN \cite{chien2020adaptive}, JacobiConv \cite{wang2022powerful}, among others.

In this section, we employ the filter representation of Chebyshev polynomials. Subsequently, leveraging this filter formulation, we construct a fundamental linear learner for image classification described as $\mathbf{H(\Lambda)}
	=\sum_{m=0}^{M-1}c_{k,m}\tilde{T}_m(\boldsymbol{\Lambda})$,
where $c_{k,m}$ represents the polynomial coefficient, as elucidated in (\ref{eq17}). This coefficient ensemble forms the essential parameter set for training the learner layer. 
To augment the fitting capability of the learner, a parameterized weight matrix $\Theta$ is introduced to perform radial transformations on the input graph signal matrix, yielding 
$\widetilde{\mathbf{g}} = \mathcal{F}_{A}^H \mathbf{H}(\boldsymbol{\Lambda}) \mathcal{F}_{A}\mathbf{f} \Theta$.
The squared loss function in spectral domain is expressed as
$\hat{R} = \frac{1}{N} \|\mathbf{H}(\boldsymbol{\Lambda}) \mathcal{F}_{A} \mathbf{f}\Theta -  \mathcal{F}_{A}\mathbf{g}\|_F^2$.
Subsequently, the filter undergoes training using Algorithm \ref{algo2} to address image classification tasks.

To assess the validity of our model, we applied it to the Euclidean scenario using the benchmark MNIST classification problem \cite{defferrard2016convolutional}, which comprises a dataset of 70,000 digits depicted on a 2D grid measuring 28 × 28. We specifically focused on discriminating between the challenging digits 4 and 9, each comprising 6,824 samples, totaling 13,648 instances.
For our graph-based model, we constructed a 4-NN graph from the 2D grid, resulting in a graph with $N = \|\mathcal{V}\| = 784$ nodes. The weights of the K-NN similarity graph were computed using 
$w_{i,j}=\exp \left(-\frac{\left\|z_i-z_j\right\|_2^2}{\epsilon^2}\right)$,
where $z_i$ denotes the 2D coordinate of pixel $i$.
For the dataset comprising 13,648 samples, 70\% were randomly selected for training, while the remaining 30\% were allocated to the test set. 

\begin{example}
	Let the parameters of GLCT be $\sigma=0.5$, $\xi=0$, and $\beta=1.1$, with a learning rate of $\epsilon=0.001$. The degree of the Chebyshev polynomial is $M=10$.
	The accuracy achieved on the training set is 95.15\%, while the accuracy on the test set is 93.58\%.
\end{example}

\begin{example}
	We conducted tests on the varying outcomes of the linear canonical parameter amidst minor fluctuations, while keeping all other parameters constant (change $\sigma$ and $\beta$, , keeping $\xi = 0$ fixed). As depicted in Table \ref{tab1}, the highest accuracy was attained when the parameters were $\sigma=0.9$, $\xi=0$, and $\beta=0.5$. This suggests that fine-tuning the linear canonical parameter can enhance model performance.
\end{example}

\begin{table*}[h]%调节图片位置，h：浮动；t：顶部；b:底部；p：当前位置
	\centering
	\caption{Prediction Results of Test Set with Different Parameters (\%)}
	\label{tab1}  
	\begin{tabular}{l l l l l l l l l l l}%表格中的数据居中，c的个数为表格的列数
		\hline\hline\noalign{\smallskip}	
		 &  $\sigma=0.5$  & $\sigma=0.6$ & $\sigma=0.7$ & $\sigma=0.8$ & $\sigma=0.9$ & $\sigma=1$ & $\sigma=1.1$ & $\sigma=1.2$ & $\sigma=1.3$ & $\sigma=1.4$   \\
		\noalign{\smallskip}\hline\noalign{\smallskip}
		$\beta=0.5$ & 93.16  & 92.9 & 93.78 & 93.32 & \textbf{94.09} & 93.38 & 93.54 & 93.62 & 93.23 & 93.67 \\
		$\beta=0.6$ & 93.19  & 93.14 & 93.78 & 93.58 & 93.87 & 93.56 & 93.25 & 93.43 & 93.19 & 93.19 \\
		$\beta=0.7$ & 93.32  & 92.97 & 93.73 & 93.47 & 93.91 & 93.38 & 93.52 & 93.69 & 92.86 & 93.65 \\
		$\beta=0.8$ & 93.27  & 93.73 & 93.56 & 93.32 & 93.5 & 92.64 & 92.94 & 93.41 & 93.32 & 93.73 \\
		$\beta=0.9$ & 93.36  & 93.62 & 93.34 & 93.43 & 93.45 & 91.78 & 93.16 & 93.89 & 93.47 & 93.36 \\
		$\beta=1$ & 93.69  & 93.56 & 93.76 & 93.41 & 93.45 & 90.83 & 92.86 & 93.58 & 93.45 & 93.38 \\
		$\beta=1.1$ & 93.58  & 93.52 & 93.23 & 93.25 & 93.34 & 91.65 & 92.75 & 93.73 & 93.25 & 93.19 \\
		$\beta=1.2$ & 93.56  & 93.45 & 93.23 & 93.38 & 93.43 & 92.75 & 93.08 & 94.08 & 93.41 & 93.21 \\
		$\beta=1.3$ & 93.67  & 93.16 & 93.25 & 93.32 & 93.47 & 92.5 & 92.81 & 94.06 & 92.57 & 93.23 \\
		$\beta=1.4$ & 93.36  & 93.23 & 93.43 & 93.95 & 93.71 & 93.34 & 93.34 & 93.8 & 92.64 & 93.47 \\
		\noalign{\smallskip}\hline
	\end{tabular}
\end{table*}

\section{Conclusion} \label{sec6}
This paper proposes a new graph linear canonical transform (GLCT). Furthermore, we discuss the vertex-frequency analysis framework based on this GLCT, delving into the specific delineations of different vertex-frequency analysis methods defined by various types of filters. Lastly, we examine filter design, encompassing optimal design and filter learning based on stochastic gradient descent, and apply them in image classification. The GLCT proposed in this paper demonstrates promising application prospects, with the filter design section akin to the convolutional layers of spectral graph neural networks (SGNNs), which can potentially be employed in large-scale SGNNs in the future. Subsequently, we will delve deeper into exploring the impact of linear canonical parameters to ensure their widespread applicability in practice.

\section*{Acknowledgments}
This work was funded by the National Natural Science Foundation of China [No. 62171041], and the Natural Science Foundation of Beijing Municipality [No. 4242011].

\bibliographystyle{IEEEtran}
\bibliography{mybib}

% Generated by IEEEtran.bst, version: 1.14 (2015/08/26)
\begin{thebibliography}{10}
\providecommand{\url}[1]{#1}
\csname url@samestyle\endcsname
\providecommand{\newblock}{\relax}
\providecommand{\bibinfo}[2]{#2}
\providecommand{\BIBentrySTDinterwordspacing}{\spaceskip=0pt\relax}
\providecommand{\BIBentryALTinterwordstretchfactor}{4}
\providecommand{\BIBentryALTinterwordspacing}{\spaceskip=\fontdimen2\font plus
\BIBentryALTinterwordstretchfactor\fontdimen3\font minus
  \fontdimen4\font\relax}
\providecommand{\BIBforeignlanguage}[2]{{%
\expandafter\ifx\csname l@#1\endcsname\relax
\typeout{** WARNING: IEEEtran.bst: No hyphenation pattern has been}%
\typeout{** loaded for the language `#1'. Using the pattern for}%
\typeout{** the default language instead.}%
\else
\language=\csname l@#1\endcsname
\fi
#2}}
\providecommand{\BIBdecl}{\relax}
\BIBdecl

\bibitem{sandryhaila2014big}
A.~Sandryhaila and J.~M. Moura, ``Big data analysis with signal processing on
  graphs: Representation and processing of massive data sets with irregular
  structure,'' \emph{IEEE signal processing magazine}, vol.~31, no.~5, pp.
  80--90, 2014.

\bibitem{sandryhaila2013discrete}
------, ``Discrete signal processing on graphs,'' \emph{IEEE transactions on
  signal processing}, vol.~61, no.~7, pp. 1644--1656, 2013.

\bibitem{ortega2018graph}
A.~Ortega, P.~Frossard, J.~Kova{\v{c}}evi{\'c}, J.~M. Moura, and
  P.~Vandergheynst, ``Graph signal processing: Overview, challenges, and
  applications,'' \emph{Proceedings of the IEEE}, vol. 106, no.~5, pp.
  808--828, 2018.

\bibitem{shuman2020localized}
D.~I. Shuman, ``Localized spectral graph filter frames: A unifying framework,
  survey of design considerations, and numerical comparison,'' \emph{IEEE
  Signal Processing Magazine}, vol.~37, no.~6, pp. 43--63, 2020.

\bibitem{shuman2013emmerging}
D.~Shuman, S.~Narang, P.~Frossard, A.~Ortega, and P.~Vanderghenyst, ``The
  emmerging field of signal processing on graphs,'' \emph{IEEE Signal Proc.
  Magazine}, 2013.

\bibitem{sandryhaila2014discrete}
A.~Sandryhaila and J.~M. Moura, ``Discrete signal processing on graphs:
  Frequency analysis,'' \emph{IEEE Transactions on signal processing}, vol.~62,
  no.~12, pp. 3042--3054, 2014.

\bibitem{rubinstein2010dictionaries}
R.~Rubinstein, A.~M. Bruckstein, and M.~Elad, ``Dictionaries for sparse
  representation modeling,'' \emph{Proceedings of the IEEE}, vol.~98, no.~6,
  pp. 1045--1057, 2010.

\bibitem{stankovic2017vertex}
L.~Stankovi{\'c}, M.~Dakovi{\'c}, and E.~Sejdi{\'c}, ``Vertex-frequency
  analysis: A way to localize graph spectral components [lecture notes],''
  \emph{IEEE Signal Processing Magazine}, vol.~34, no.~4, pp. 176--182, 2017.

\bibitem{stankovic2020vertex}
L.~Stankovi{\'c}, D.~Mandic, M.~Dakovi{\'c}, B.~Scalzo, M.~Brajovi{\'c},
  E.~Sejdi{\'c}, and A.~G. Constantinides, ``Vertex-frequency graph signal
  processing: A comprehensive review,'' \emph{Digital signal processing}, vol.
  107, p. 102802, 2020.

\bibitem{shuman2012windowed}
D.~I. Shuman, B.~Ricaud, and P.~Vandergheynst, ``A windowed graph fourier
  transform,'' in \emph{2012 IEEE Statistical Signal Processing Workshop
  (SSP)}.\hskip 1em plus 0.5em minus 0.4em\relax Ieee, 2012, pp. 133--136.

\bibitem{shuman2016vertex}
------, ``Vertex-frequency analysis on graphs,'' \emph{Applied and
  Computational Harmonic Analysis}, vol.~40, no.~2, pp. 260--291, 2016.

\bibitem{stankovic2019vertex}
L.~Stankovi{\'c} and E.~Sejdi{\'c}, \emph{Vertex-frequency analysis of graph
  signals}.\hskip 1em plus 0.5em minus 0.4em\relax Springer, 2019.

\bibitem{pei2002}
S.~C. Pei and J.~J. Ding, ``Eigenfunctions of linear canonical transform,''
  \emph{IEEE Transactions on Signal Processing}, vol.~50, no.~1, pp. 11--26,
  2002.

\bibitem{Healy2016}
J.~J. Healy, M.~A. Kutay, H.~M. Ozaktas, and J.~T. Sheridan, \emph{Linear
  canonical transforms: Theory and applications}.\hskip 1em plus 0.5em minus
  0.4em\relax Springer, 2016, vol. 198.

\bibitem{Pei2001}
S.~C. Pei and J.~J. Ding, ``Relations between fractional operations and
  time-frequency distributions, and their applications,'' \emph{IEEE
  Transactions on Signal Processing}, vol.~49, no.~8, pp. 1638--1655, 2001.

\bibitem{wang2017fractional}
Y.~Q. Wang, B.~Z. Li, and Q.~Y. Cheng, ``The fractional fourier transform on
  graphs,'' in \emph{2017 Asia-Pacific Signal and Information Processing
  Association Annual Summit and Conference (APSIPA ASC)}.\hskip 1em plus 0.5em
  minus 0.4em\relax IEEE, 2017, pp. 105--110.

\bibitem{zhang2023discrete}
Y.~Zhang and B.~Z. Li, ``Discrete linear canonical transform on graphs,''
  \emph{Digital Signal Processing}, vol. 135, p. 103934, 2023.

\bibitem{wu2020fractional}
J.~Wu, F.~Wu, Q.~Yang, Y.~Zhang, X.~Liu, Y.~Kong, L.~Senhadji, and H.~Shu,
  ``Fractional spectral graph wavelets and their applications,''
  \emph{Mathematical Problems in Engineering}, vol. 2020, pp. 1--18, 2020.

\bibitem{yan2021windowed}
F.~J. Yan and B.~Z. Li, ``Windowed fractional fourier transform on graphs:
  Properties and fast algorithm,'' \emph{Digital Signal Processing}, vol. 118,
  p. 103210, 2021.

\bibitem{brown2009general}
R.~A. Brown, M.~L. Lauzon, and R.~Frayne, ``A general description of linear
  time-frequency transforms and formulation of a fast, invertible transform
  that samples the continuous s-transform spectrum nonredundantly,'' \emph{IEEE
  Transactions on Signal Processing}, vol.~58, no.~1, pp. 281--290, 2009.

\bibitem{wei2022linear}
D.~Y. Wei, Y.~J. Zhang, and Y.~M. Li, ``Linear canonical stockwell transform:
  theory and applications,'' \emph{IEEE Transactions on Signal Processing},
  vol.~70, pp. 1333--1347, 2022.

\bibitem{wei2014generalized}
D.~Y. Wei and Y.~M. Li, ``Generalized wavelet transform based on the
  convolution operator in the linear canonical transform domain,''
  \emph{Optik}, vol. 125, no.~16, pp. 4491--4496, 2014.

\bibitem{kou2012windowed}
K.~I. Kou and R.~H. Xu, ``Windowed linear canonical transform and its
  applications,'' \emph{Signal Processing}, vol.~92, no.~1, pp. 179--188, 2012.

\bibitem{chen2020understanding}
X.~Chen, ``Understanding spectral graph neural network,'' \emph{arXiv preprint
  arXiv:2012.06660}, 2020.

\bibitem{bo2023survey}
D.~Bo, X.~Wang, Y.~Liu, Y.~Fang, Y.~Li, and C.~Shi, ``A survey on spectral
  graph neural networks,'' \emph{arXiv preprint arXiv:2302.05631}, 2023.

\bibitem{wang2022powerful}
X.~Wang and M.~Zhang, ``How powerful are spectral graph neural networks,'' in
  \emph{International Conference on Machine Learning}.\hskip 1em plus 0.5em
  minus 0.4em\relax PMLR, 2022, pp. 23\,341--23\,362.

\bibitem{ozturk2021optimal}
C.~Ozturk, H.~M. Ozaktas, S.~Gezici, and A.~Ko{\c{c}}, ``Optimal fractional
  fourier filtering for graph signals,'' \emph{IEEE Transactions on Signal
  Processing}, vol.~69, pp. 2902--2912, 2021.

\bibitem{defferrard2016convolutional}
M.~Defferrard, X.~Bresson, and P.~Vandergheynst, ``Convolutional neural
  networks on graphs with fast localized spectral filtering,'' \emph{Advances
  in neural information processing systems}, vol.~29, 2016.

\bibitem{chien2020adaptive}
E.~Chien, J.~Peng, P.~Li, and O.~Milenkovic, ``Adaptive universal generalized
  pagerank graph neural network,'' \emph{arXiv preprint arXiv:2006.07988},
  2020.

\bibitem{bruna2013spectral}
J.~Bruna, W.~Zaremba, A.~Szlam, and Y.~LeCun, ``Spectral networks and locally
  connected networks on graphs,'' \emph{arXiv preprint arXiv:1312.6203}, 2013.

\bibitem{liao2019lanczosnet}
R.~Liao, Z.~Zhao, R.~Urtasun, and R.~S. Zemel, ``Lanczosnet: Multi-scale deep
  graph convolutional networks,'' \emph{arXiv preprint arXiv:1901.01484}, 2019.

\bibitem{stockwell1996localization}
R.~G. Stockwell, L.~Mansinha, and R.~Lowe, ``Localization of the complex
  spectrum: the s transform,'' \emph{IEEE transactions on signal processing},
  vol.~44, no.~4, pp. 998--1001, 1996.

\bibitem{hammond2011wavelets}
D.~K. Hammond, P.~Vandergheynst, and R.~Gribonval, ``Wavelets on graphs via
  spectral graph theory,'' \emph{Applied and Computational Harmonic Analysis},
  vol.~30, no.~2, pp. 129--150, 2011.

\bibitem{pei2011discrete}
S.~C. Pei and Y.~C. Lai, ``Discrete linear canonical transforms based on
  dilated hermite functions,'' \emph{JOSA A}, vol.~28, no.~8, pp. 1695--1708,
  2011.

\bibitem{cumming2005digital}
I.~G. Cumming and F.~H. Wong, ``Digital processing of synthetic aperture radar
  data,'' \emph{Artech house}, vol.~1, no.~3, pp. 108--110, 2005.

\bibitem{jestrovic2017fast}
I.~Jestrovi{\'c}, J.~L. Coyle, and E.~Sejdi{\'c}, ``A fast algorithm for
  vertex-frequency representations of signals on graphs,'' \emph{Signal
  processing}, vol. 131, pp. 483--491, 2017.

\bibitem{leonardi2011wavelet}
N.~Leonardi and D.~Van De~Ville, ``Wavelet frames on graphs defined by fmri
  functional connectivity,'' in \emph{2011 IEEE International Symposium on
  Biomedical Imaging: From Nano to Macro}.\hskip 1em plus 0.5em minus
  0.4em\relax IEEE, 2011, pp. 2136--2139.

\bibitem{podder2014comparative}
P.~Podder, T.~Z. Khan, M.~H. Khan, and M.~M. Rahman, ``Comparative performance
  analysis of hamming, hanning and blackman window,'' \emph{International
  Journal of Computer Applications}, vol.~96, no.~18, pp. 1--7, 2014.

\end{thebibliography}

\vfill

\end{document}